\newtheorem{mainthm}{Theorem}
\newtheorem{theorem}{Theorem}[section]
\newtheorem{lemma}[theorem]{Lemma}
\newtheorem{cor}[theorem]{Corollary}
\theoremstyle{definition}
\newtheorem{rem}[theorem]{Remark}
\numberwithin{equation}{section}
\providecommand{\abs}[1]{\left\lvert#1\right\rvert}
\title{Isotropy invariant graphical mean curvature flows \\in warped products}
\author{Naotoshi Fujihara and Naoyuki Koike}
\date{}
\begin{document}
\maketitle
\begin{abstract}
In this paper, we study the graphical mean curvature flow in a warped product $_r G/K \times I$, where $G/K$ is a symmetric space of compact type, $I$ is an open interval, and $r$ is a smooth positive function on $I$. 
If the initial hypersurface is $K$-equivariant, then the $K$-equivariance is preserved along the mean curvature flow. Here, we note that isotropy group $K$ acts naturally on both $G/K$ and $_r G/K \times I$. 
If the flow is graphical, then it follows from the $K$-equivariance of the flow that it can be described by using $K$-invariant functions on $G/K$.
We derive the flow equation which these functions satisfy. 
By using the flow equation, we prove that the mean curvature flow exists for infinite time under the conditions that $G/K$ is a rank one symmetric space of compact type and the warping function $r$ satisfies certain additional properties. 
The proof is carried out by estimating the gradient of the $K$-invariant functions satisfying the flow equation.
\end{abstract}

\section{Introduction}\label{sec:intro}

Let $(M, g_M)$ be a closed Riemannian manifold and $(I, d z \otimes d z)$ an open interval. 
For smooth positive functions $\varphi \colon M \to \mathbb{R}$ and $r \colon I \to \mathbb{R}$, two types of warped product $M \times _{\varphi} I$ and $_{r} M \times I$ are defined respectively. 
In \cite{BM}, Borisenko and Miquel studied the mean curvature flow starting from graph hypersurfaces in the warped product $M \times _{\varphi} I$. In this case, the graph property is generally preserved along the flow. 
However, in the warped product $_{r} M \times I$, it is known that the graph property is generally not preserved along the mean curvature flow. 
The condition for the graph property to be preserved along the flow was given by Huang, Zhang, and Zhou \cite{HZZ}. They studied the asymptotic behavior of the graphical mean curvature flow in the warped product $_r M \times I$ of a closed Riemannian manifold $M$ with Ricci curvatures bounded from below and an open interval $I$ by a certain kind of warping function $r$. 
In \cite{F}, the graphical mean curvature flow in the warped product $_r M \times I$ was studied in a similar setting. 
In this paper, we study the behavior of the $K$-invariant graphical mean curvature flow in the warped product $_r G/K \times I$ of a symmetric space $G/K$ of compact type and an open interval $I$, and prove the ones similar to results in \cite{HZZ} and \cite{F}. 

Let $M$ be an $n$-dimensional smooth manifold and $(\overline{M}, \overline{g})$ an $(n+1)$-dimensional smooth Riemannian manifold.
Also let $F \colon M \times [0,T) \to \overline{M}$ be a smooth map such that $F_t \coloneqq F(\cdot, t) \colon M \to \overline{M}$ ($t \in [0,T)$) are smooth embeddings. 
The map $F$ is called a \textsl{mean curvature flow} if it satisfies 
\begin{equation}\label{eq:MCF}
\frac{\partial F}{\partial t} = \bm{H}_t,
\end{equation}
where $\bm{H}_t$ is a mean curvature vector field of the hypersurface $F_t(M)$. 
Let $G/K$ be a rank one symmetric space of compact type, and $\mathfrak{g} = \mathfrak{k} \oplus \mathfrak{p}$ be a canonical decomposition of the Lie algebra $\mathfrak{g}$ associated to the symmetric pair $(G, K)$. 
We take the $G$-invariant metric determined by $- B \vert_{\mathfrak{p} \times \mathfrak{p}}$ as a Riemannian metric on $G/K$, where $B$ is the Killing form of $\mathfrak{g}$. 
Let $\mathfrak{a}$ be a maximal abelian subspace of $\mathfrak{p}$ and $e^0_1$ be a unit vector of $\mathfrak{a}$.
Then, for a $K$-invariant function $u_0 \colon G/K \to I$, define a function $\widehat{u}_0 \colon \mathbb{R} \to I$ by
\begin{equation*}
\widehat{u}_0(x) \coloneqq u_0(\exp_{e K}(x e^0_1)),
\end{equation*}
and set $\widehat{u}'_0 \coloneqq d \widehat{u}_0 /d x$.
Let $F \colon G/K \times [0,T) \to {_r}G/K \times I$ be the mean curvature flow with $F(p, 0) = (p, u_0(p))$ ($p \in G/K$) for the $K$-invariant function $u_0$. 
Under this setting, we obtain the following theorems. 

\begin{mainthm}\label{thm:corHZZ}
Let $I = (-a, a)$ and $r \colon I \to \mathbb{R}$ be a positive smooth function. 
Assume that $r$ satisfies the following conditions:
\begin{description}
\item[(1)] $r(0) = 1$ and $r'(0) = 0$;
\item[(2)] $r'(z) > 0$ for all $z \in (0, a)$ and $r'(z) < 0$ for all $z \in (-a, 0)$;
\item[(3)] $r(z) r''(z) - r'(z)^2 \geq 0$.
\end{description} 
If $\widehat{u}_0$ satisfies 
\begin{equation*}
\max_{x \in\mathbb{R}} \abs{\widehat{u}_0(x)} < a_0, \quad \max_{x \in\mathbb{R}} \abs{\widehat{u}'_0(x)} < \sqrt{ \frac{n}{2(n+1) \log r(a_0)} }
\end{equation*}
for some $a_0 \in (0,a)$, then the mean curvature flow $F$ exists for all $t \in [0,\infty)$.
\end{mainthm}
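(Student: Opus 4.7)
The plan is to establish long-time existence via the standard mean curvature flow continuation criterion: provided the flow remains graphical, the height function stays uniformly bounded strictly inside $I$, and the gradient stays uniformly bounded, standard parabolic regularity gives higher-order estimates and the flow extends smoothly to $[0,\infty)$. The $K$-invariance, which by an earlier result in the paper is preserved along the flow, lets me reduce the whole analysis to the single-variable function $\widehat{u}(x,t)$ associated with the $K$-invariant height function $u(\cdot,t)$, and use the flow equation for $\widehat{u}$ derived earlier in the paper.

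First I would prove the $C^0$ bound $\max_x|\widehat{u}(x,t)|<a_0$ for all $t$. This is a straightforward application of the parabolic maximum principle to $\widehat{u}$. Conditions (1) and (2) on $r$ make $z=0$ a totally geodesic fiber and force the flow to be ``pulled'' toward this fiber: at an interior maximum of $\widehat{u}$ with $\widehat{u}>0$ one has $\widehat{u}'=0$ and $r'(\widehat{u})>0$, so the lower-order term in the flow equation has the correct sign to show $\max_x\widehat{u}(\cdot,t)$ is non-increasing (and symmetrically for the minimum). This preserves the initial hypothesis $\max|\widehat{u}_0|<a_0$.

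The main work is the gradient estimate $\max_x|\widehat{u}'(x,t)|<\sqrt{n/(2(n+1)\log r(a_0))}$. Here I would apply the parabolic maximum principle to an auxiliary quantity of the form $\Phi(x,t)=\psi(\widehat{u})(\widehat{u}')^{2}$ where $\psi$ is a carefully chosen positive function of the height. A natural candidate suggested by the explicit constant in the statement is $\psi(z)=(\log r(a_0)-\log r(z))^{-1}$ or a similar expression involving $\log r$, tuned so that the leading bad terms in $\partial_t\Phi-\mathcal L\Phi$ (where $\mathcal L$ is the linearization of the flow operator) are absorbed. Condition (3), equivalent to $(\log r)''\geq 0$, is exactly what is needed to ensure a favorable sign in the resulting expression; the factor $n/(2(n+1))$ reflects the dimension of $G/K$ and the contribution of radial versus tangential second derivatives in the ambient Laplacian of the rank-one symmetric space. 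Combining the initial bound on $|\widehat{u}_0'|$ with the maximum principle for $\Phi$, one gets the desired time-uniform gradient bound.

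Once these $C^0$ and $C^1$ bounds are established, the flow remains a graph over $G/K$ (the normal never becomes horizontal) and stays inside the compact slab $G/K\times[-a_0,a_0]$, on which the ambient geometry and the warping function are uniformly controlled. The flow equation for $\widehat{u}$ is then uniformly parabolic with smooth coefficients, so Krylov--Safonov and Schauder theory yield uniform $C^k$ estimates in space-time for every $k$, which by the continuation criterion for mean curvature flow rules out finite-time singularities and proves existence on $[0,\infty)$. The principal obstacle is clearly the gradient estimate: identifying the correct auxiliary function $\Phi$ and verifying, through a careful computation using the precise form of the flow equation, that condition (3) together with the quantitative hypothesis on $|\widehat{u}_0'|$ yields precisely the stated constant $\sqrt{n/(2(n+1)\log r(a_0))}$.
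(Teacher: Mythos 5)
Your overall architecture ($C^0$ bound, then a gradient bound, then standard regularity and the continuation criterion) matches the paper's, and your $C^0$ step is fine (the paper instead traps $\widehat{u}$ between the two ODE barriers $Z^{\pm}(t)$ solving $\dot Z^{\pm}=-n\,r'(Z^{\pm})/r(Z^{\pm})$, $Z^{\pm}(0)=\pm a_0$, which are themselves $K$-invariant graphical MCF solutions; this is cleaner near the singular boundary of the orbit space, but your interior-maximum argument also works). The problem is that the entire content of the theorem lives in the gradient estimate, and there you have only a guess. Two things are missing. First, before any maximum principle can be run, one must derive the evolution inequality for $\mu=(\widehat{u}')^2$ and absorb the singular terms $\cot(\lambda(e^0_1)x)$, $\cot(2\lambda(e^0_1)x)$ and $1/\sin^2$ coming from the shape operators of the isotropy orbits; the paper does this by completing a square (Lemma 5.1), and it is precisely this absorption that produces the coefficient $2(n+1)$ via $4+2m_\lambda+2m_{2\lambda}\le 2(n+1)$ — not, as you speculate, a ``radial versus tangential'' count in the ambient Laplacian. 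Without this step your linearized operator $\mathcal{L}$ is not even well defined up to the boundary of $\widehat{C}$.

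Second, the mechanism you propose for the estimate itself — a pointwise maximum principle for a static auxiliary function $\Phi=\psi(\widehat{u})(\widehat{u}')^2$ with $\psi(z)=(\log r(a_0)-\log r(z))^{-1}$ — does not obviously work and is not how the constant arises. With that $\psi$, the initial value $\Phi(\cdot,0)$ blows up as $\widehat{u}_0$ approaches $\pm a_0$, so a bound of $\Phi$ by its initial maximum would not give the stated hypothesis/conclusion. The paper's argument is instead a time-integrated Riccati comparison: from $\partial_t\mu\le \mu''/(r^2+\mu)+\beta\mu'+2(n+1)\mu^2(r'/r)^2(\widehat{u})$ one gets, for $\phi(t)=\max_x\mu(x,t)$,
\begin{equation*}
-\frac{d}{dt}\Bigl(\frac{1}{\phi}\Bigr)\le 2(n+1)\,\frac{r'(Z^{\pm}(t))^2}{r(Z^{\pm}(t))^2},
\end{equation*}
and the right-hand side has \emph{finite total integral} equal to $\tfrac{2(n+1)}{n}\log R_0$ because $\overline{\phi}(t)r(Z(t))^2$ is conserved (Lemma 5.3) and $r(Z^{\pm}(t))$ decreases monotonically from $r(\pm a_0)$ to $1$. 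Integrating over all time gives $1/\phi(t)\ge 1/\phi(0)-\tfrac{2(n+1)}{n}\log R_0>0$, which is exactly the threshold $\sqrt{n/(2(n+1)\log r(a_0))}$. You would need either to carry out your $\Phi$-computation and show it closes with the correct constant, or to switch to this ODE-comparison argument; as written, the decisive step is asserted rather than proved.
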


\begin{mainthm}\label{thm:corF}
Let $I = (-\infty, a)$ and $r \colon I \to \mathbb{R}$ be a positive smooth function.
Assume that $r$ satisfies the following conditions:
\begin{description}
\item[(1)] $r'(z) > 0$ for all  $z \in (-\infty,a)$;
\item[(2)] $r(z) r''(z) - (1 + \alpha){r'(z)}^2 \geq 0$ for some positive constant $\alpha$ greater than one.
\end{description} 
If $\widehat{u}_0$ satisfies
\begin{equation*}
\min_{x \in \mathbb{R}} \widehat{u}_0(x) > a_1, \quad \max_{x \in \mathbb{R}} \abs{\widehat{u}'_0(x)} < \sqrt{\frac{n(\alpha - 1)}{n+1}} r(a_1)
\end{equation*}
for some constant $a_1$ less than $a$, then the mean curvature flow $F$ exists for all $t \in [0,\infty)$.
\end{mainthm}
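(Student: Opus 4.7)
The plan is to exploit the $K$-equivariance to reduce the flow to a one-variable quasilinear parabolic equation for $\widehat{u}(x,t)$, with $\widehat{u}(\cdot, 0) = \widehat{u}_0$, and then to derive a priori height and gradient estimates via the parabolic maximum principle. Once $\widehat{u}$ and $\widehat{u}'$ are uniformly controlled, the reduced equation becomes uniformly parabolic in a controlled subregion of $I$, and long-time existence follows from standard quasilinear parabolic regularity together with the short-time-existence/continuation criterion used analogously in \cite{F,HZZ}.

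First, I would establish a height lower bound. Condition~(1), namely $r'(z) > 0$ on $(-\infty, a)$, forces the zeroth-order term of the reduced flow equation at a spatial minimum of $\widehat{u}$ to be non-negative (it is essentially proportional to $r'(\widehat{u})/r(\widehat{u})$ after the $K$-invariant reduction). Applying the maximum principle to $\min_{x} \widehat{u}(\cdot, t)$ then yields $\widehat{u}(x,t) \geq \min_x \widehat{u}_0(x) > a_1$ for all $t \in [0,T)$, so the flow stays in a region where $r$, $r'$, and $r''$ are uniformly controlled and bounded away from the singular behavior at $z \to -\infty$.

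The core of the argument is the gradient estimate. A natural candidate for the test quantity is
\[
Q(x,t) \;\coloneqq\; \frac{\widehat{u}'(x,t)^{2}}{r(\widehat{u}(x,t))^{2}},
\]
which is the squared norm of the graph gradient with respect to the warped metric. Computing $\partial_t Q$ from the reduced flow equation produces (i) a linear elliptic operator acting on $Q$, (ii) a term proportional to $-\bigl(r(\widehat{u}) r''(\widehat{u}) - (1+\alpha) r'(\widehat{u})^{2}\bigr) \widehat{u}'^{4}/r(\widehat{u})^{4}$, which is non-positive by condition~(2), and (iii) a remaining algebraic term that is non-positive precisely when $Q \leq n(\alpha-1)/(n+1)$. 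The hypothesis on $\widehat{u}_0$ together with $\widehat{u} \geq a_1$ and $r$ increasing gives $Q(x,0) \leq (\max_x |\widehat{u}_0'|)^2/r(a_1)^2 < n(\alpha-1)/(n+1)$, so the parabolic maximum principle propagates this strict inequality to all $t > 0$, yielding a uniform gradient bound.

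The main obstacle is this gradient estimate: choosing the correct test quantity and carefully grouping the terms in $\partial_t Q$ so that condition~(2) with $\alpha > 1$ yields exactly the threshold $\sqrt{n(\alpha-1)/(n+1)}\, r(a_1)$ appearing in the hypothesis, where the role of $\alpha > 1$ (rather than $\alpha \geq 0$) is to give strict positive slack in the maximum principle argument. Once the uniform bounds on $\widehat{u}$ and $\widehat{u}'$ are in hand, the reduced equation is uniformly parabolic on the relevant slab, so Krylov--Safonov and Schauder estimates give uniform $C^{2,\beta}$ bounds on $\widehat{u}$, and the standard continuation criterion forces $T = \infty$.
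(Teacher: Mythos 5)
Your overall strategy (reduce to the one-variable equation via $K$-invariance, prove a gradient bound by a maximum principle applied to a normalized gradient quantity, deduce a positive lower bound on the angle function $\Theta_t$, and conclude long-time existence) is the same as the paper's. However, there are two genuine problems.

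First, the height bound is wrong, and the justification offered for it contains a sign error. The zeroth-order term of the reduced equation is $-n\,r'(\widehat u)/r(\widehat u)$, which is strictly \emph{negative} under condition (1), so the spatial minimum of $\widehat u$ decreases in time and the graph slides off toward $-\infty$ (this is precisely the behavior in this regime; see the second figure in the Introduction and the title of \cite{F}). Hence $\widehat u(\cdot,t)>a_1$ fails for large $t$, the flow does not stay ``bounded away from the singular behavior at $z\to-\infty$'', and your final uniform-parabolicity claim can only hold on finite time intervals (which does suffice for continuation, but must be argued with a time-dependent barrier rather than a fixed one). The paper instead compares $\widehat u$ with the ODE solution $Z(t)$ of $Z'=-n\,r'(Z)/r(Z)$, $Z(0)=a_1$, which yields the \emph{moving} lower barrier $Z(t)<\widehat u(x,t)$.

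Second, the core computation is asserted rather than carried out, and the choice of test quantity matters here. The paper works with $\psi=\widehat u'^2/r(Z(t))^2$, normalized by the barrier rather than by $r(\widehat u)$, precisely so that the only extra term produced by differentiating the normalization, $2n\psi\,r'(Z)^2/r(Z)^2$, can be absorbed using $Z<\widehat u$ together with the monotonicity of $r'/r$ (which follows from condition (2)). With your $Q=\widehat u'^2/r(\widehat u(x,t))^2$, the $x$- and $t$-derivatives of $r(\widehat u)$ generate additional first- and second-order terms whose signs you have not checked, so the claim that the leftover algebraic term is non-positive exactly at the threshold $Q=n(\alpha-1)/(n+1)$ is unverified. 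Moreover, the reduced equation carries the singular coefficients $m_\lambda\lambda(e^0_1)/\tan(\lambda(e^0_1)x)$ and $2m_{2\lambda}\lambda(e^0_1)/\tan(2\lambda(e^0_1)x)$ coming from the orbital part of the Laplacian; in the evolution of $\widehat u'^2$ these produce $1/\sin^2$ terms that blow up at $x=0$ and $x=\pi/2\lambda(e^0_1)$ and must be controlled. The paper does this by completing a square (Lemma 5.1), bounding their contribution by $2(m_\lambda+m_{2\lambda})\mu^2r'^2/r^4$, and this is exactly where the constant $n+1$ in the threshold $\sqrt{n(\alpha-1)/(n+1)}\,r(a_1)$ comes from. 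Your proposal does not address these terms, so the stated threshold is not yet justified.
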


\begin{figure}
\centering
\begin{tikzpicture}[domain=-5:5,samples=100,>=stealth]
\draw[->,dashed] (-5.5,0) -- (5.5,0) node[right] {$z$};

\draw (-6.5,-0.5) to [out=180,in=180] (-6.5,0.5);
\draw (-6.5,-0.5) to [out=0,in=0] (-6.5,0.5);
\draw (-6.5,-0.5) node[below] {$G/K$};

\draw(-5,1)to[out=-10,in=-170](5,1);
\draw(-5,-1)to[out=10,in=170](5,-1);

\draw[dotted] (-5,-1) to [out=180,in=180] (-5,1);
\draw[dotted] (-5,-1) to [out=0,in=0] (-5,1);
\draw (-5,-0.05)--(-5,0.05);
\draw (-5,0) node[above] {$-a$};

\draw (-3,-0.7) to [out=180,in=180] (-3,0.7);
\draw[dotted] (-3,-0.7) to [out=0,in=0] (-3,0.7);

\draw (0,-0.5) to [out=180,in=180] (0,0.5);
\draw[dotted] (0,-0.5) to [out=0,in=0] (0,0.5);
\draw[thick] (0, 0.05) -- (0,-0.05);
\draw (0,0) node[below] {$0$};

\draw (3,-0.7) to [out=180,in=180] (3,0.7);
\draw[dotted] (3,-0.7) to [out=0,in=0] (3,0.7);

\draw[dotted] (5,-1) to [out=180,in=180] (5,1);
\draw[dotted] (5,-1) to [out=0,in=0] (5,1);
\draw (5,-0.05)--(5,0.05);
\draw (5,0) node[above] {$a$};

\draw[red,thick] (2.5,-0.63) to [out=0,in=180] (3.5,0.75); 
\draw[dotted,red,thick] (2.5,-0.63) to [out=180,in=0] (3.5,0.75); 

\draw[red,thick] (0.8,-0.5) to [out=0,in=180] (1,0.51); 
\draw[dotted,red,thick] (0.8,-0.5) to [out=180,in=0] (1,0.51);
\draw (1.5,0.51) node[above] {$F_t(G/K)$};

\draw[->,dashed] (0.8,-0.7)--(0,-0.7);
\draw (0.4,-0.7) node[below]{$(t \to \infty)$};

\draw[red,thick] (0,-0.5) to [out=180,in=180] (0,0.5);
\draw[dotted,red,thick] (0,-0.5) to [out=0,in=0] (0,0.5);

\end{tikzpicture}
\caption{Theorem \ref{thm:corHZZ}}
\label{fig:thm}
\end{figure}

\begin{figure}
\centering
\begin{tikzpicture}[domain=-5:5,samples=100,>=stealth]
\draw[->,dashed] (-5.5,0) -- (5.5,0) node[right] {$z$};

\draw (-6.5,-0.5) to [out=180,in=180] (-6.5,0.5);
\draw (-6.5,-0.5) to [out=0,in=0] (-6.5,0.5);
\draw (-6.5,-0.5) node[below] {$G/K$};

\draw(-5,0.1)to[out=1,in=-170](5,0.7);
\draw(-5,-0.1)to[out=-1,in=170](5,-0.7);

\draw (-4,-0.1) to [out=180,in=180] (-4,0.1);
\draw[dotted] (-4,-0.1) to [out=0,in=0] (-4,0.1);

\draw (0,-0.17) to [out=180,in=180] (0,0.17);
\draw[dotted] (0,-0.17) to [out=0,in=0] (0,0.17);

\draw (4,-0.53) to [out=180,in=180] (4,0.53);
\draw[dotted] (4,-0.53) to [out=0,in=0] (4,0.53);

\draw[dotted] (5,-0.7) to [out=180,in=180] (5,0.7);
\draw[dotted] (5,-0.7) to [out=0,in=0] (5,0.7);
\draw (5,-0.05)--(5,0.05);
\draw (5,0) node[above] {$a$};

\draw[red,thick] (2.5,-0.35) to [out=0,in=180] (3.5,0.45); 
\draw[dotted,red,thick] (2.5,-0.35) to [out=180,in=0] (3.5,0.45); 

\draw[red,thick] (0.3,-0.17) to [out=0,in=180] (0.5,0.19); 
\draw[dotted,red,thick] (0.3,-0.17) to [out=180,in=0] (0.5,0.19); 

\draw[red,thick] (-3.7,-0.1) to [out=180,in=180] (-3.65,0.1);
\draw[dotted,red,thick] (-3.7,-0.1) to [out=0,in=0] (-3.65,0.1);
\draw (-3.3,0.1) node[above] {$F_t(G/K)$};
\draw[->,dashed] (-3.7,-0.3)--(-5,-0.3);
\draw (-4.35,-0.3) node[below]{$(t \to \infty)$};

\end{tikzpicture}  

\caption{Theorem \ref{thm:corF}}
\label{fig:thm}
\end{figure}

Theorem \ref{thm:corHZZ} corresponds to Theorem 1.2 of \cite{HZZ} and Theorem \ref{thm:corF} corresponds to Theorem 1.2 of \cite{F}.

This paper is organized as follows. In Section \ref{sec:warped}, we give an explicit expression of the mean curvature of graph hypersurface in the general warped product $_r M \times I$, and in Sections \ref{sec:mean  curvature} and \ref{sec:invariant mcf}, we investigate the $K$-invariant graph hypersurface and the $K$-invariant graphical mean curvature flow in ${}_r G/K \times I$ by using the explicit expression of the mean curvature in Section \ref{sec:warped} repectively, where $G/K$ is a rank one symmetric space of compact type. Theorems \ref{thm:corHZZ} and \ref{thm:corF} are proven in Section \ref{sec:proof}.

\section{Explicit formula of mean curvature}\label{sec:warped}

We recall the definition of warped products. 
Let $(M, g_M)$ be an $n$-dimensional closed Riemannian manifold, and $r(z)$ a positive function defined on an open interval $I$.
A warped product $(\overline{M}, \overline{g})$ of $M$ and $I$ with a warping function $r$ is defined by a product manifold $M \times I$ equipped with the metric 
\begin{align*}
\overline{g} = ({\pi_I}^*r)^2 {\pi_M}^* g_M + \pi_I^* (dz \otimes dz),
\end{align*}
where $\pi_M \colon \overline{M} \to M$ and $\pi_I \colon \overline{M} \to I$ are the natural projections onto $M$ and $I$, respectively.
Let $\{ e^M_i \}$ be a local orthonormal frame field on $(M, g_M)$ and we define
\begin{align*}
    (\overline{E}_i)_{(p,z)} \coloneqq \frac{1}{r(z)} (e^M_i)^{L}_{(p,z)}, \quad (\overline{\partial_z})_{(p,z)} \coloneqq (\partial_z)^L_{(p,z)}
\end{align*}
where $(\cdot)^{L}$ denotes the natural lift to $\overline{M}$. 
The Levi-Civita connections of $g_M$ and $\overline{g}$ are denoted by $\nabla^M$ and $\overline{\nabla}$, respectively. 
Then, we have the following formulas (see \cite{O'Neill}, for example).
\begin{lemma}\label{lem:covariant}
We have the following formulas:
\begin{align*}
    &\overline{\nabla}_{\overline{E}_i} \overline{E}_j = \frac{1}{r^2} \left( \nabla^{M}_{e^M_i} e^M_j \right)^L - \frac{r'}{r} \delta_{i j} \overline{\partial_z}, \quad \overline{\nabla}_{\overline{\partial_z}} \overline{E}_i = 0, \\
    &\overline{\nabla}_{\overline{E}_i} \overline{\partial_z} = \frac{r'}{r} \overline{E}_i, \quad \overline{\nabla}_{\overline{\partial_z}} \overline{\partial_z} = 0.
\end{align*}
\end{lemma}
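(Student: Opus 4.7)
The plan is to derive these four identities from the standard warped product connection formulas (as in O'Neill, Proposition 7.35, which is what the paper is implicitly invoking) and then convert from the non-normalized lifted frame $\{(e^M_i)^L, \overline{\partial_z}\}$ to the orthonormal frame $\{\overline{E}_i, \overline{\partial_z}\}$ by rescaling by $1/r$.

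First I would work with $V_i \coloneqq (e^M_i)^L$ and $\overline{\partial_z}$, viewing $_rG/K \times I$ as a warped product with one-dimensional base $I$, fiber $M$, and warping function $r$. Horizontal vectors are multiples of $\overline{\partial_z}$ and vertical vectors are lifts of $M$-vectors. O'Neill's formulas then immediately give $\overline{\nabla}_{\overline{\partial_z}}\overline{\partial_z}=0$ (since the base is one-dimensional and flat), $\overline{\nabla}_{\overline{\partial_z}} V_i = \overline{\nabla}_{V_i}\overline{\partial_z} = (r'/r) V_i$ (the mixed horizontal-vertical case), and $\overline{\nabla}_{V_i}V_j = (\nabla^M_{e^M_i}e^M_j)^L - \frac{\overline{g}(V_i,V_j)}{r}\,\mathrm{grad}_{\overline{g}}\,r$, where $\mathrm{grad}_{\overline{g}}\,r = r'\,\overline{\partial_z}$ and $\overline{g}(V_i,V_j)=r^2\delta_{ij}$, so this last expression simplifies to $(\nabla^M_{e^M_i}e^M_j)^L - r r' \delta_{ij}\,\overline{\partial_z}$.

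Next I would use the key fact that $r$ is a function of $z$ only, so $V_i(r) = (e^M_i)^L(r) = 0$. This eliminates several cross terms when applying the Leibniz rule to $\overline{E}_i = V_i/r$. Specifically: $\overline{\nabla}_{\overline{E}_i}\overline{E}_j = \frac{1}{r^2}\overline{\nabla}_{V_i}V_j$ reproduces the first formula immediately; $\overline{\nabla}_{\overline{E}_i}\overline{\partial_z} = \frac{1}{r}\overline{\nabla}_{V_i}\overline{\partial_z} = \frac{r'}{r^2}V_i = \frac{r'}{r}\overline{E}_i$ yields the third; and $\overline{\nabla}_{\overline{\partial_z}}\overline{E}_i = \overline{\partial_z}(1/r)\,V_i + (1/r)\overline{\nabla}_{\overline{\partial_z}}V_i = -\frac{r'}{r^2}V_i + \frac{r'}{r^2}V_i = 0$, where the cancellation of the two terms is precisely what one should double-check.

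There is really no hard step here; the content is entirely bookkeeping. The only place to be careful is the last cancellation, and the convention for $\mathrm{grad}_{\overline{g}}\,r$ in the warped metric (it must be computed using $\overline{g}(\overline{\partial_z},\overline{\partial_z})=1$, not against the $M$-directions, which is why only the $\overline{\partial_z}$-component appears and why $r^2\delta_{ij}/r \cdot r' = r r' \delta_{ij}$ gives the factor $r'/r$ after division by $r^2$). A fully self-contained alternative would be to apply the Koszul formula to the frame $\{\overline{E}_i, \overline{\partial_z}\}$ directly; in that case one needs the brackets $[\overline{\partial_z},\overline{E}_i] = -(r'/r)\overline{E}_i$ and $[\overline{E}_i,\overline{E}_j] = (1/r^2)[e^M_i,e^M_j]^L$, both obtained from $V_i(r)=0$ and $[\overline{\partial_z}, V_i]=0$, and then one recovers the same formulas.
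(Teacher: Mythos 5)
Your derivation is correct, and it is exactly the argument the paper leaves implicit: the lemma is stated without proof and attributed to O'Neill, whose warped product connection formulas you correctly specialize to base $I$, fiber $M$, warping function $r(z)$, and then rescale to the orthonormal frame using $(e^M_i)^L(r)=0$. All four identities, including the cancellation in $\overline{\nabla}_{\overline{\partial_z}}\overline{E}_i=0$, check out.
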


Let $u \colon M \to I$ be a smooth function and define $f \colon M \to \overline{M}$ by $f(p) \coloneqq (p, u(p))$. 
In the remainder of this section, we calculate the mean curvature of $f(M)$ in the same way as \cite{BM}. 
We use the same local frame fields $\{ e^M_i \}$ and $\{ \overline{E}_1, \dots, \overline{E}_n, \overline{\partial_z} \}$. 
Set $u_i \coloneqq e^M_i (u)$ and $u_{i j} \coloneqq e^M_j ( e^M_i (u) )$. 
Then, we have
\begin{equation*}
d f(e^M_i) = (r \circ u) \overline{E}_i + u_i \overline{\partial_z},
\end{equation*}
so the induced metric $g \coloneqq f^* \overline{g}$ is locally given as follows:
\begin{equation*}
g_{i j} = g (e^M_i, e^M_j) = (r \circ u)^2 \delta_{i j} + u_i u_j.
\end{equation*}
The inverse matrix $(g^{i j})$ of $(g_{i j})$ is given by
\begin{equation*}
g^{i j} = \frac{1}{(r \circ u)^2} \left( \delta^{i j} - \frac{u_i u_j}{w^2} \right),
\end{equation*}
where $w^2 \coloneqq (r \circ u)^2 + \abs{\mathrm{grad}\, u}^2$.
The unit normal vector field $N$ is given by
\begin{equation*}
N = \frac{1}{w} \left( - \sum_{i = 1}^n u_i \overline{E}_i + (r \circ u)  \overline{\partial_z} \right).
\end{equation*} 
Using above formulas, we then calculate the second fundamental form $h = ( h_{i j} )$ with respect to $- N$. 
First, we have the following:
\begin{align*}
h_{i j} =&
\overline{g} \left( \overline{\nabla}^f_{e^M_i} N, d f(e^M_j) \right) \\
=& \frac{1}{w} \overline{g} \left( \overline{\nabla}^f_{e^M_i} \left( - \sum_{k = 1}^n u_k \overline{E}_k + (r \circ u)  \overline{\partial_z} \right), d f(e^M_j) \right) \\
=&  \frac{1}{w} \overline{g} \left( - \sum_{k = 1}^n \left( u_{k i} \overline{E}_k + u_k \overline{\nabla}_{d f(e^M_i)} \overline{E}_k \right), (r \circ u) \overline{E}_j + u_j \overline{\partial_z} \right) \\
&+ \frac{1}{w} \overline{g} \left(  (r' \circ u) u_i \overline{\partial_z} + (r \circ u) \overline{\nabla}_{d f(e^M_i)} \overline{\partial_z} , (r \circ u) \overline{E}_j + u_j \overline{\partial_z} \right) \\
=&- \frac{(r \circ u) u_{j i}}{w} - \sum_{k = 1}^n \frac{u_k}{w} \overline{g} \left( \overline{\nabla}_{d f(e^M_i)} \overline{E}_k, (r \circ u) \overline{E}_j + u_j \overline{\partial_z} \right) \\
&+ \frac{(r' \circ u) u_i u_j}{w} + \frac{r \circ u}{w} \overline{g} \left( \overline{\nabla}_{d f(e^M_i)} \overline{\partial_z} , (r \circ u) \overline{E}_j + u_j \overline{\partial_z} \right),
\end{align*}
where $\overline{\nabla}^f$ denotes the pull-back connection of $\overline{\nabla}$ by $f$.
By using Lemma \ref{lem:covariant}, we have
\begin{align*}
\overline{\nabla}_{d f(e^M_i)} \overline{E}_k 
=& (r \circ u) \overline{\nabla}_{\overline{E}_i} \overline{E}_k + u_i \overline{\nabla}_{\overline{\partial_z}} \overline{E}_k = \frac{1}{r \circ u} \left( \nabla^{M}_{e^M_i} e^M_k \right)^L - (r' \circ u) \delta_{i k} \overline{\partial_z}, \\
\overline{\nabla}_{d f(e^M_i)} \overline{\partial_z} 
=& (r \circ u) \overline{\nabla}_{\overline{E}_i} \overline{\partial_z} + u_i \overline{\nabla}_{\overline{\partial_z}} \overline{\partial_z} = (r' \circ u) \overline{E}_i.
\end{align*}
Therefore, we obtain
\begin{align*}
h_{i j}
=& - \frac{(r \circ u) u_{j i}}{w} - \sum_{k = 1}^n \frac{u_k}{w} \overline{g} \left( \left( \nabla^{M}_{e^M_i} e^M_k \right)^L,\overline{E}_j \right) + \frac{(r' \circ u) u_i u_j}{w} \\
&+ \frac{(r' \circ u) u_i u_j}{w} + \frac{(r \circ u)^2 (r' \circ u)}{w} \delta_{i j} \\ 
=& - \frac{(r \circ u) u_{j i}}{w} - \sum_{k = 1}^n \frac{(r \circ u) u_k}{w} g_M \left( \nabla^{M}_{e^M_i} e^M_k,e^M_j \right) + \frac{(r' \circ u) u_i u_j}{w}+ \frac{r' \circ u}{w} g_{i j} \\
=& - \frac{r \circ u}{w} \left( \nabla^M \nabla^M u \right) (e^M_i, e^M_j) + \frac{(r' \circ u) u_i u_j}{w}+ \frac{r' \circ u}{w} g_{i j},
\end{align*}
where we use $g_M \left( \nabla^{M}_{e^M_i} e^M_k,e^M_j \right) = - g_M \left(e^M_k,  \nabla^{M}_{e^M_i} e^M_j \right)$.
Thus, we have 
\begin{align*}
H 
=& \sum_{i, j = 1}^n g^{i j} h_{i j} \\
=& - \sum_{i, j = 1}^n \frac{1}{(r \circ u) w} \left( \delta^{i j} - \frac{u_i u_j}{w^2} \right) \left( \nabla^M \nabla^M u \right) (e^M_i, e^M_j) \\
&  + \sum_{i, j = 1}^n \frac{(r' \circ u) u_i u_j}{(r \circ u)^2 w} \left( \delta^{i j} - \frac{u_i u_j}{w^2} \right) + n \frac{r' \circ u}{w} \\
=& - \frac{1}{(r \circ u) w} \Delta^M u + \frac{1}{(r \circ u) w^3} \left( \nabla^M \nabla^M u \right) \left( \mathrm{grad}\, u, \mathrm{grad}\, u \right) \\
&+ \frac{r' \circ u}{(r \circ u)^2 w^3} \abs{\mathrm{grad}\, u}^2 \left( w^2 - \abs{\mathrm{grad}\, u}^2 \right) + n \frac{r' \circ u}{w} \\
=&- \frac{1}{(r \circ u) w} \Delta^M u + \frac{1}{(r \circ u) w^3} \left( \nabla^M \nabla^M u \right) \left( \mathrm{grad}\, u, \mathrm{grad}\, u \right) + \frac{r' \circ u}{w^3} \abs{\mathrm{grad}\, u}^2 + n \frac{r' \circ u}{w}.
\end{align*}
From the above argument, we obtain the following expression of the mean curvature.
\begin{lemma}\label{lem:mean curvature}
The mean curvature of the graph hypersurface of $u$ is given by
\begin{equation*}
H = - \frac{1}{(r \circ u) w} \Delta^M u + \frac{1}{2 (r \circ u) w^3} g_M \left( \mathrm{grad} \left( \abs{\mathrm{grad}\, u}^2 \right), \mathrm{grad}\, u \right) + \frac{r' \circ u}{w^3} \abs{\mathrm{grad}\, u}^2 + n \frac{r' \circ u}{w},
\end{equation*}
where $w^2 = (r \circ u)^2 + \abs{\mathrm{grad}\, u}^2$.
\end{lemma}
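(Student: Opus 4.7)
The plan is to follow the standard approach for graph hypersurfaces in warped products, as in \cite{BM}: pick convenient frames, compute the induced metric, inverse metric, and unit normal, then compute the second fundamental form and trace it. First I would fix a local orthonormal frame $\{e^M_i\}$ on $M$ and use the horizontal lift frame $\{\overline{E}_i, \overline{\partial_z}\}$ on the warped product, so that Lemma \ref{lem:covariant} supplies all Levi-Civita connection formulas in closed form. The graph map $f(p) = (p, u(p))$ satisfies $df(e^M_i) = (r\circ u)\overline{E}_i + u_i \overline{\partial_z}$, which immediately yields the induced metric $g_{ij} = (r\circ u)^2 \delta_{ij} + u_i u_j$, its inverse $g^{ij} = (r\circ u)^{-2}(\delta^{ij} - u_i u_j/w^2)$ via the Sherman--Morrison formula, and the unit normal $N = w^{-1}(-\sum_k u_k \overline{E}_k + (r\circ u)\overline{\partial_z})$.

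Next I would compute $h_{ij} = \overline{g}(\overline{\nabla}^f_{e^M_i} N, df(e^M_j))$ by differentiating $N$ component-by-component and using Lemma \ref{lem:covariant} to evaluate $\overline{\nabla}_{df(e^M_i)} \overline{E}_k$ and $\overline{\nabla}_{df(e^M_i)} \overline{\partial_z}$. Applying the antisymmetry $g_M(\nabla^M_{e^M_i} e^M_k, e^M_j) = -g_M(e^M_k, \nabla^M_{e^M_i} e^M_j)$ of the connection coefficients allows one to recognise the Hessian of $u$ in the resulting expression and yields the compact form $h_{ij} = -\tfrac{r\circ u}{w}(\nabla^M \nabla^M u)(e^M_i, e^M_j) + \tfrac{r'\circ u}{w}(u_i u_j + g_{ij})$. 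Contracting with $g^{ij}$ and carefully simplifying the cross terms via $w^2 - |\mathrm{grad}\,u|^2 = (r\circ u)^2$ produces a preliminary formula in which the Hessian term appears as $(\nabla^M\nabla^M u)(\mathrm{grad}\,u, \mathrm{grad}\,u)/((r\circ u) w^3)$.

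To obtain the stated form it remains to rewrite this Hessian term using the identity $(\nabla^M\nabla^M u)(\mathrm{grad}\,u, \mathrm{grad}\,u) = \tfrac{1}{2} g_M(\mathrm{grad}(|\mathrm{grad}\,u|^2), \mathrm{grad}\,u)$, which is a one-line consequence of differentiating $|\mathrm{grad}\,u|^2 = g_M(\mathrm{grad}\,u, \mathrm{grad}\,u)$ and invoking the symmetry of the Hessian. The calculation is entirely routine; the only real obstacle is the careful bookkeeping of the many contractions and of sign conventions (in particular the choice to work with $-N$ rather than $N$ in the definition of $h$), together with the algebraic simplification needed to collapse the two Hessian contributions arising from $\delta^{ij}$ and from $u_i u_j/w^2$ into a single clean term.
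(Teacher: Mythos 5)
Your proposal is correct and follows essentially the same route as the paper: compute the induced metric, its inverse, and the unit normal from $df(e^M_i) = (r\circ u)\overline{E}_i + u_i\overline{\partial_z}$, evaluate $h_{ij}$ using the connection formulas of Lemma \ref{lem:covariant} together with the antisymmetry of the connection coefficients to recognise the Hessian, trace with $g^{ij}$, and finally rewrite $(\nabla^M\nabla^M u)(\mathrm{grad}\,u,\mathrm{grad}\,u)$ as $\tfrac{1}{2}g_M(\mathrm{grad}(|\mathrm{grad}\,u|^2),\mathrm{grad}\,u)$. This last identity is exactly the content of the paper's short formal proof, the rest of the computation being carried out in the text preceding the lemma.
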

\begin{proof}
We have the following equality:
\begin{align*}
\left( \nabla^M \nabla^M u \right) \left(\mathrm{grad}\, u, \mathrm{grad}\, u \right) 
=& \left(\mathrm{grad}\, u\right) g_M \left( \mathrm{grad}\, u, \mathrm{grad}\, u \right) - g_M \left( \mathrm{grad}\, u, \nabla^M_{\mathrm{grad}\, u} \mathrm{grad}\, u\right) \\
=& g_M \left(\nabla^M_{\mathrm{grad}\, u} \mathrm{grad}\, u, \mathrm{grad}\, u \right) \\
=& \frac{1}{2} \left( \mathrm{grad}\, u \right) \abs{\mathrm{grad}\, u}^2 \\
=& \frac{1}{2} g_M \left( \mathrm{grad} \left( \abs{\mathrm{grad}\, u}^2 \right), \mathrm{grad}\, u \right).
\end{align*}
Using this equality, the lemma follows immediately.
\end{proof}

\section{Mean curvature of isotropy invariant graph}\label{sec:mean curvature}
In this section, we consider the case where $(M, g_M)$ is a Riemannian symmetric space $G/K$ of compact type.
Let $\mathfrak{g}$ and $\mathfrak{k}$ be the Lie algebras of $G$ and $K$, respectively,  and $\theta$ be the involution of $G$ with $(\mathrm{Fix} \, \theta)_0 \subset K \subset \mathrm{Fix} \, \theta$, where $\mathrm{Fix} \, \theta$ denotes the fixed point group of $\theta$ and $(\mathrm{Fix}\, \theta)_0$ is the identity component of $\mathrm{Fix} \, \theta$. 
Let the same symbol $\theta$ denote the involution of $\mathfrak{g}$ induced from $\theta$. 
Set $\mathfrak{p} \coloneqq \mathrm{Ker}(\theta + \mathrm{id}_{\mathfrak{g}})$. 
We have the decomposition $\mathfrak{g} = \mathfrak{k} \oplus \mathfrak{p}$, which is called the \textit{canonical decomposition} associated with the symmetric pair $(G, K)$. 
We take the $G$-invariant metric determined by $- B \vert_{\mathfrak{p} \times \mathfrak{p}}$ as a Riemannian metric on $G/K$, where $B$ is the Killing form of $\mathfrak{g}$, and we identify $\mathfrak{p}$ with $T_{e K}(G/K)$ through the linear isomorphism $\pi_{\ast}\vert_{\mathfrak{p}} \colon \mathfrak{p} \to T_{e K}(G/K)$, where $\pi$ is the natural projection of $G$ onto $G/K$.
The isotropy subgroup $K$ of $G$ at $e K$ acts on $M = G/K$ naturally, and is called an isotropy action on $G/K$.
Let $k$ be the rank of the symmetric space $G/K$. Let 
\begin{equation}\label{root decomposition}
    \mathfrak{p} = \mathfrak{a} \oplus \left( \oplus_{\lambda \in \Delta_+} \mathfrak{p}_{\lambda} \right)
\end{equation}
be the restricted root space decomposition of $\mathfrak{p}$ with respect to a maximal abelian subspace $\mathfrak{a}$ of $\mathfrak{p}$, where $\Delta_{+}$ is the positive root system under some lexicographical ordering of $\mathfrak{a}$.
Define a domain $C'$ of $\mathfrak{a}$ by
\begin{align*}
    C' = \{ v \in \mathfrak{a} \mid 0 < \lambda(v) < \pi \, (\lambda \in \Delta_+)  \}
\end{align*}
and set $\mathcal{U}' \coloneqq K \cdot \mathrm{exp}_{e K} ( C' ) = \bigcup_{p \in \mathrm{exp}_{e K} (C')} K \cdot p$, where we note that $\dim \mathfrak{a} = k$ and that $K \cdot p$ ($p \in \mathcal{U}'$) are principal orbits. 
It is known that $\mathcal{U}'$ is an open dense subset of $M$.
Then, we construct a local orthonormal frame field on $\mathcal{U}'$ that is suitable for our purpose.

\begin{lemma}\label{lem:frame on M}
Let $\{ e^0_i \}_{i = 1, \dots, m_0}$ and $\{ e^{\lambda}_i \}_{i = 1, \dots, m_{\lambda}}$ be orthonormal bases of $\mathfrak{a}$ and $\mathfrak{p}_{\lambda}$, respectively, where $m_0 = \dim \mathfrak{a}( = k)$ and $m_{\lambda} = \dim \mathfrak{p}_{\lambda}$.
Then, there is a local orthonormal frame field 
\begin{align*}
\mathcal{F}^M \coloneqq \bigcup_{\bullet \in \{0\} \cup \Delta_+} \left\{ (e^{\bullet}_1)^M, \dots, (e^{\bullet}_{m_{\bullet}})^M \right\}
\end{align*}
on $\mathcal{U}'$ consisting of $K$-equivariant vector fields satisfying two conditions: 
\begin{description}
\item{(I)}
$(e^0_i)^M_{e K} = e^0_i \in \mathfrak{a}$, $(e^{\lambda}_i)^M_{e K} = e^{\lambda}_i \in \mathfrak{p}_{\lambda}$, and 
\begin{align*}
    \nabla^M_{(e^0_i)^M} (e^0_j)^M = 0, \quad \left( \nabla^M_{(e^{\lambda}_i)^M} (e^0_j)^M \right)_{k \cdot p} = \frac{\lambda(e^0_j)}{\tan \left( \sum_{l=1}^k \lambda(e^0_l) x^l \right)} (e^{\lambda}_i)^M,
\end{align*}
where $p = \mathrm{exp}_{e K}(\sum_{l = 1}^k x^l e^0_l) \in \mathrm{exp}_{e K}(C')$ and $k \in K$. 
\item{(II)}
For any $K$-invariant smooth function $u \colon M \to \mathbb{R}$, that is, the function $u$ that satisfies $u(p) = u(k \cdot p)$ for all $p \in M$ and $k \in K$, we have
\begin{align*}
    (e^{\lambda}_i)^M (u) = 0
\end{align*}
for $i = 1, \dots, m_{\lambda}$ and $\lambda \in \Delta_+$.
\end{description}
\end{lemma}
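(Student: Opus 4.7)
The plan is to construct the frame first on the slice $\exp_{e K}(C')$, using the well-known fact that $\exp_{e K}(\mathfrak{a})$ is a totally geodesic flat submanifold of $G/K$, and then to propagate to $\mathcal{U}'$ by $K$-equivariance. On the slice, at $p = \exp_{e K}(\sum_l x^l e^0_l)$, I would define $(e^0_j)^M_p$ and $(e^\lambda_i)^M_p$ as the parallel translates of $e^0_j$ and $e^\lambda_i$ along the radial geodesic $t \mapsto \exp_{e K}(tv)$. Because parallel transport along a geodesic in a symmetric space coincides with the differential of the transvection $L_{\exp(tv)}$, the resulting vectors lie, respectively, in the $\mathfrak{a}$-direction (normal to the orbit) and in the $\mathfrak{p}_\lambda$-direction (tangent to the orbit), by the standard Jacobi-field description of $d\exp_{e K}$. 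This yields an orthonormal frame on the slice with the required initial values at $e K$.

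I would then extend by $(e^\bullet_i)^M_{k \cdot p} := dk_p((e^\bullet_i)^M_p)$ for $k \in K$ and $p$ in the slice. Well-definedness is a little subtle because the isotropy of a point $p$ in the principal orbit is the centralizer of $\mathfrak{a}$ in $K$, which fixes each $e^0_j$ but only preserves each root space $\mathfrak{p}_\lambda$ as a set; hence the $(e^\lambda_i)^M$ give a $K$-equivariant frame only in a tube around a chosen principal orbit, which is enough for a local frame. Condition (II) is then immediate: by construction every $(e^\lambda_i)^M$ is tangent to a $K$-orbit, and any $K$-invariant function is constant along orbits and hence annihilated by orbit-tangent vector fields.

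What remains are the covariant derivative formulas. The identity $\nabla^M_{(e^0_i)^M}(e^0_j)^M = 0$ holds because, restricted to the flat totally geodesic submanifold $\exp_{e K}(\mathfrak{a})$, the frame $\{(e^0_j)^M\}$ is a parallel Euclidean frame, and the vanishing second fundamental form of $\exp_{e K}(\mathfrak{a})$ promotes this to the ambient covariant derivative; the $K$-equivariance then propagates the identity to all of $\mathcal{U}'$. The second formula is the shape-operator identity for the principal orbit $K \cdot p$: since $(e^0_j)^M$ is a unit normal in the slice direction, the tangential part of $\nabla^M_{(e^\lambda_i)^M}(e^0_j)^M$ equals $-A_{(e^0_j)^M}(e^\lambda_i)^M$, where $A$ is the Weingarten operator. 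The main obstacle will be to compute $A_{(e^0_j)^M}$ and show that it is diagonal in the root-space basis with eigenvalue $-\lambda(e^0_j)/\tan(\lambda(v))$ on the $\mathfrak{p}_\lambda$-component; this is the standard Jacobi-field calculation along $t \mapsto \exp_{e K}(v + t e^0_j)$, using $\mathrm{ad}(v)^2 = -\lambda(v)^2 \mathrm{id}$ on $\mathfrak{p}_\lambda$, but the sign and normalization bookkeeping must be tracked carefully to match the stated form. The normal component of the covariant derivative then vanishes by total geodesy of the slice, completing the verification of (I).
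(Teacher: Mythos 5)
Your construction is essentially the paper's: the frame is obtained by transvecting (equivalently, parallel-translating) the chosen bases of $\mathfrak{a}$ and of the root spaces along the radial geodesics of the flat section and then pushing around by $K$; condition (II) and the identity $\nabla^M_{(e^0_i)^M}(e^0_j)^M=0$ are argued exactly as you do, and the second identity in (I) is the Weingarten formula for the principal orbit combined with the eigenvalue computation of the shape operator (which the paper simply quotes from Verh\'oczki rather than rederiving via Jacobi fields, but your $\mathrm{ad}(v)^2$-based plan is the standard way to prove that citation). Your remark on well-definedness is in fact more careful than the paper's own proof: since the principal isotropy $K_p$ is the centralizer of $\mathfrak{a}$ and can rotate within each $\mathfrak{p}_\lambda$, the equivariant extension of $(e^\lambda_i)^M$ only becomes a genuine frame after choosing a local section of $K\to K\cdot p$; note, though, that the obstruction lives along the orbit (not in a transverse tube), and it is harmless because the lemma only claims a local frame and the quantities used later are traces over $\mathfrak{p}_\lambda$.

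The one step whose stated justification would fail is the vanishing of the normal component $\nabla^{\perp}_{(e^\lambda_i)^M}(e^0_j)^M$. Total geodesy of the section $\exp_{eK}(\mathfrak{a})$ controls $\nabla^M_Y Z$ for $Y,Z$ tangent to the section, whereas here the differentiation is in the direction $(e^\lambda_i)^M$, which is tangent to the orbit and normal to the section, so total geodesy of the slice says nothing directly about this term. The correct reason, and the one the paper invokes, is that $(e^0_j)^M$ restricted to the principal orbit $K\cdot p$ is a $K$-equivariant normal vector field, and equivariant normal fields on principal orbits of an isometric action are parallel with respect to the normal connection (Palais--Terng, Theorem 5.6.7(6) or Theorem 5.7.1(1)). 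If you want to avoid the citation, you must prove that statement (or the flatness and triviality of the normal holonomy of the orbits of a polar action); it does not follow in one step from the section being totally geodesic.
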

\begin{proof}
Set $v = \sum_{i = 1}^k x^i e^0_i$ and $p = \mathrm{exp}_{e K}(v) $, and let $\gamma$ be a geodesic defined by $\gamma(s) \coloneqq \mathrm{exp}_{e K}(s v)$, and $s_q \colon M \to M$ be the involutive isometry having $q$ as the isolated fixed point.
We set $T_s$ as $T_s = s_{\gamma(s/2)} \circ s_{\gamma(0)}$, a transvection along $\gamma$.
Then, we define a $K$-equivariant orthonormal frame field by
\begin{align*}
    (e^{\bullet}_i)^M_{k \cdot \gamma(s)} \coloneqq d k_{\gamma(s)} (d T_s (e^{\bullet}_i))
\end{align*}
for $\bullet \in \{ 0 \} \cup \Delta_+$. 
This frame field is defined on $\mathcal{U}'$ since $\mathrm{exp}_{e K}(C')$ intersects only once with each orbit $K \cdot p$ of the isotropy action.
We also have $T^{\perp}_{q} (K \cdot q) = d T_s (\mathfrak{a})$, and $T_{q} (K \cdot q) = d T_s (\left( \oplus_{\lambda \in \Delta_+} \mathfrak{p}_{\lambda} \right))$ for $q = \gamma(s)$, hence for $q' = k \cdot q \in \mathcal{U}'$, we have
\begin{align*}
    (e^0_i)^M_{q'} \in T^{\perp}_{q'} (K \cdot q), \quad (e^{\lambda}_j)^M_{q'} \in T_{q'}(K \cdot q),
\end{align*}
for $i = 1, \dots, k$ and $j = 1, \dots, m_{\lambda}$.
We show that this frame field satisfies the conditions in the lemma. 
Since $\mathrm{exp}_{e K}(\mathfrak{a})$ is a flat totally geodesic submanifold in $G / K$, we have
\begin{equation*}
\nabla^M_{(e^0_i)^M} (e^0_j)^M = 0.
\end{equation*}
Let us consider the principal orbit $K \cdot p$. From Weingarten formula, we have
\begin{align*}
\left( \nabla^M_{(e^{\lambda}_i)^M} (e^0_j)^M \right)_{p} = -A_{(e^0_j)^M_p} (e^{\lambda}_i)^M_p + \nabla^{\perp}_{(e^{\lambda}_i)^M} (e^0_j)^M,
\end{align*} 
where $A$ is a shape operator of $K \cdot p$ and $\nabla^{\perp}$ is a normal connection. 
It is known that $\nabla^{\perp}_{(e^{\lambda}_i)^M} (e^0_j)^M$ vanishes because $(e^0_j)^M$ is a $K$-equivariant normal vector field on the principal orbit $K \cdot p$ (see, for example, Theorem 5.6.7(6) or Theorem 5.7.1(1) in \cite{PT}), and by Theorem 1 in \cite{V}, the shape operator is given by
\begin{equation*}
A_{(e^0_j)^M_p} (e^{\lambda}_i)^M_p = - \frac{\lambda(e^0_j)}{\tan (\sum_{l=1}^k \lambda(e^0_l) x^l)} (e^{\lambda}_i)^M.
\end{equation*}
Hence, we obtain the covariant derivatives at every point in $\mathcal{U}'$ since the isotropy action is isometric. 
Lastly, if the function $u \colon M \to \mathbb{R}$ is $K$-invariant, we have
\begin{equation*}
(e^{\lambda}_i)^M_{k \cdot p} (u) = d T_1 (e^{\lambda}_i) (u \circ k) =  d T_1 (e^{\lambda}_i) (u) = \frac{d}{d t} \biggl\vert_{t = 0} u(c(t) \cdot p) = \frac{d}{d t} \biggl\vert_{t = 0} u(p) =  0, 
\end{equation*}
where $c(t)$ is a smooth curve in $K$ with $c(0) = e$ and $\dot{c}(0) = d T_1 (e^{\lambda}_i) \in T_p (K \cdot p) $.
\end{proof}

From now on, we use the orthonormal frame field $\mathcal{F}^M$ on $\mathcal{U}'$ defined in the proof of Lemma \ref{lem:frame on M}, and we define the orthonormal frame field 
\begin{align*}
\overline{\mathcal{F}} \coloneqq \left(\bigcup_{\bullet \in \{0\} \cup \Delta_+} \{ \overline{E}^{\bullet}_1, \dots, \overline{E}^{\bullet}_{m_{\bullet}} \} \right) \cup \{ \overline{\partial_z}\}
\end{align*}
on $\mathcal{U}' \times I$ by 
\begin{equation*}
(\overline{E}^{\bullet}_i)_{(p,z)} \coloneqq \frac{1}{r(z)} ((e^{\bullet}_i)^M)^L_{(p,z)}, \quad (\overline{\partial_z})_{(p,z)} \coloneqq (\partial_z)^L_{(p,z)},
\end{equation*}
where $(\cdot)^L$ is the lift to $\mathcal{U}' \times I$. 
Let $u \colon M \to \mathbb{R}$ be a $K$-invariant smooth function, and set $u_i \coloneqq (e^0_i)^M (u)$ and $u_{i j} \coloneqq (e^0_j)^M (e^0_i)^M (u)$. 
From Lemma \ref{lem:frame on M}, the function $u$ satisfies $(e^{\lambda}_i)^M (u) = 0$ for $i = 1, \dots, m_{\lambda}$ and $\lambda \in \Delta_+$. 
Note that $u_i$ is also $K$-invariant since 
\begin{equation*}
    u_i (k \cdot p) = (e^0_i)^M_{k \cdot p} (u) = (e^0_i)^M_p (u \circ k) = (e^0_i)^M_p (u) = u_i(p),
\end{equation*}
and this implies $(e^{\lambda}_j)^M (u_i) = 0$. 
Then, using the frame fields $\mathcal{F}^M$ and $\overline{\mathcal{F}}$, the following lemma holds.
\begin{lemma}\label{lem:laplacian}
Let $u \colon M \to I$ be a $K$-invariant smooth function. We have the following formulas:
\begin{align*}
\abs{\mathrm{grad}\, u}^2 
&= \sum_{i = 1}^k (u_i)^2, \quad
g_M \left(\mathrm{grad} \left( \abs{\mathrm{grad}\,  u}^2 \right), \mathrm{grad}\, u \right)= 2 \sum_{i, j = 1}^k u_{i j} u_i u_j, \\
\left( \Delta^M u \right) (k \cdot p)
&= \sum_{i = 1}^k u_{i i}(p) + \sum_{\lambda \in \Delta_+} \sum_{j = 1}^k  \frac{m_{\lambda} \lambda(e^0_j) u_j(p)}{\tan \left( \sum_{l=1}^k \lambda(e^0_l) x^l \right)},
\end{align*}
where $p = \mathrm{exp}_{e K} (\sum_{l = 1}^k x^l e^0_l) \in \mathrm{exp}_{e K}(C')$.
\end{lemma}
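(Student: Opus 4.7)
The plan is to exploit the $K$-invariance of $u$ (which by Lemma \ref{lem:frame on M}(II) kills all the $(e^\lambda_i)^M$-directional derivatives) together with the explicit covariant derivative formulas of Lemma \ref{lem:frame on M}(I), and to evaluate the Laplacian by summing $(\nabla^M\nabla^M u)(X,X)$ over the frame $\mathcal{F}^M$.

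For the first formula, since $\mathrm{grad}\, u = \sum_i u_i (e^0_i)^M + \sum_\lambda \sum_j (e^\lambda_j)^M(u) (e^\lambda_j)^M$ with the second sum vanishing by $K$-invariance, the identity $|\mathrm{grad}\,u|^2 = \sum_i u_i^2$ is immediate. For the second formula, I observe that each $u_i = (e^0_i)^M(u)$ is itself $K$-invariant (as computed in the paragraph preceding the lemma), so applying the first formula to the function $|\mathrm{grad}\,u|^2 = \sum_i u_i^2$ gives
\begin{equation*}
\mathrm{grad}\bigl(|\mathrm{grad}\,u|^2\bigr) = \sum_{i,j} 2 u_i u_{ij} (e^0_j)^M,
\end{equation*}
and pairing this with $\mathrm{grad}\,u = \sum_l u_l (e^0_l)^M$ using orthonormality yields $2\sum_{i,j} u_{ij} u_i u_j$.

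The main part is the Laplacian formula. I would write $\Delta^M u = \sum_i (\nabla^M\nabla^M u)((e^0_i)^M,(e^0_i)^M) + \sum_{\lambda,j}(\nabla^M\nabla^M u)((e^\lambda_j)^M,(e^\lambda_j)^M)$, using $(\nabla^M\nabla^M u)(X,X) = X(X(u)) - (\nabla^M_X X)(u)$. The $\mathfrak{a}$-block contributes $\sum_i u_{ii}$ directly because $\nabla^M_{(e^0_i)^M}(e^0_i)^M = 0$ by Lemma \ref{lem:frame on M}(I). For the $(e^\lambda_j)^M$-block, the term $(e^\lambda_j)^M((e^\lambda_j)^M(u))$ vanishes by $K$-invariance, so only $-(\nabla^M_{(e^\lambda_j)^M}(e^\lambda_j)^M)(u)$ remains; I extract its $(e^0_l)^M$-components by differentiating the orthonormality relation $g_M((e^0_l)^M,(e^\lambda_j)^M)=0$ along $(e^\lambda_j)^M$ to get
\begin{equation*}
g_M\bigl(\nabla^M_{(e^\lambda_j)^M}(e^\lambda_j)^M,(e^0_l)^M\bigr) = -g_M\bigl((e^\lambda_j)^M,\nabla^M_{(e^\lambda_j)^M}(e^0_l)^M\bigr) = -\frac{\lambda(e^0_l)}{\tan\bigl(\sum_s \lambda(e^0_s)x^s\bigr)},
\end{equation*}
invoking Lemma \ref{lem:frame on M}(I). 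Since $\mathrm{grad}\,u$ lies in the span of the $(e^0_l)^M$, this yields $(\nabla^M\nabla^M u)((e^\lambda_j)^M,(e^\lambda_j)^M) = \sum_l \frac{\lambda(e^0_l)u_l}{\tan(\cdots)}$; summing over $j=1,\dots,m_\lambda$ and over $\lambda \in \Delta_+$ produces the claimed expression.

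The only mildly delicate step is justifying the cancellation in the $(e^\lambda_j)^M$-block: one must carefully use both $K$-invariance (to kill $(e^\lambda_j)^M((e^\lambda_j)^M(u))$) and the observation that $\mathrm{grad}\,u$ has no $\mathfrak{p}_\lambda$-component, so only the $\mathfrak{a}$-components of $\nabla^M_{(e^\lambda_j)^M}(e^\lambda_j)^M$ matter. After that, everything reduces to the shape-operator computation already encoded in Lemma \ref{lem:frame on M}(I), and the formulas fall out.
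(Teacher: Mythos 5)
Your proof is correct and follows essentially the same route as the paper: decompose $\Delta^M u$ over the frame $\mathcal{F}^M$, use Lemma \ref{lem:frame on M}(II) to kill the $(e^\lambda_j)^M$-derivatives, and recover the $\mathfrak{a}$-components of $\nabla^M_{(e^\lambda_j)^M}(e^\lambda_j)^M$ by metric compatibility against $(e^0_l)^M$ together with Lemma \ref{lem:frame on M}(I). The signs and the factor $m_\lambda$ all come out as in the paper's own argument.
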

\begin{proof}
Since $u$ is $K$-invariant, we have $(e^{\lambda}_i)^M (u) = 0$. 
Hence, we can derive the first two formulas.
Also we have
\begin{align*}
\Delta^M u 
=&  \sum_{i = 1}^k \left( \left( (e^0_i)^M \circ (e^0_i)^M \right) - \nabla^M_{(e^0_i)^M} (e^0_i)^M \right) u + \sum_{\lambda \in \Delta_+} \sum_{i = 1}^{m_{\lambda}} \left( \left( (e^{\lambda}_i)^M \circ (e^{\lambda}_i)^M \right) - \nabla^M_{(e^{\lambda}_i)^M} (e^{\lambda}_i)^M \right) u \\
=& \sum_{i = 1}^k u_{i i} - \sum_{\lambda \in \Delta_+} \sum_{i = 1}^{m_{\lambda}} \left( \nabla^M_{(e^{\lambda}_i)^M} (e^{\lambda}_i)^M \right) u
\end{align*}
from Lemma \ref{lem:frame on M}. 
Since $u$ is $K$-invariant, we also have 
\begin{align*}
\left( \nabla^M_{(e^{\lambda}_i)^M } (e^{\lambda}_i)^M \right)_{k \cdot p} u 
=& \sum_{j = 1}^k g_M \left( \left( \nabla^M_{(e^{\lambda}_i)^M} (e^{\lambda}_i)^M \right)_{k \cdot p}, (e^0_j)^M_{k \cdot p} \right) (e^0_j)^M_{k \cdot p} u \\
=& - \sum_{j = 1}^k g_M \left((e^{\lambda}_i)^M_{k \cdot p}, \left( \nabla^M_{(e^{\lambda}_i)^M} (e^0_j)^M \right)_{k \cdot p} \right) u_j(k \cdot p) \\
=& - \sum_{j = 1}^k \frac{\lambda(e^0_j) u_j(p)}{\tan \left( \sum_{l=1}^k \lambda(e^0_l) x^l \right)},
\end{align*}
where $p = \mathrm{exp}_{e K} (\sum_{l = 1}^k x^l e^0_l) \in \mathrm{exp}_{e K}(C')$. 
Thus, we obtain the last formula.
\end{proof}
We then consider a smooth map $f \colon M \to \overline{M}$ defined by a graph of $K$-invariant function $u$, that is, $f(p) = (p,u(p))$. 
From Lemma \ref{lem:mean curvature} and Lemma \ref{lem:laplacian}, we obtain the following lemma.

\begin{lemma}\label{lem:mean curvature of sym}
Let $p = \mathrm{exp}_{e K} (\sum_{l = 1}^k x^l e^0_l) \in \mathrm{exp}_{e K}(C')$.  
Then the mean curvature $H$ of $f(M)$ with respect to $-N$ is given by
\begin{align*}
H(k \cdot p)
=& - \frac{1}{r(u(p)) w(p)} \sum_{i, j = 1}^k \left( \delta_{i j} - \frac{u_i(p) u_j(p)}{w(p)^2} \right) u_{i j}(p) + \frac{r'(u(p))}{w(p)^3} \sum_{i = 1}^k u_i(p)^2 \\
&- \frac{1}{r(u(p)) w(p)}  \sum_{\lambda \in \Delta_+} \sum_{j = 1}^k \frac{m_{\lambda} \lambda(e^0_j) u_j(p)}{\tan \left( \sum_{l = 1}^k  \lambda(e^0_l) x^l \right)} + n \frac{r'(u(p))}{w(p)},
\end{align*}
where $w^2(p) = r(u(p))^2 + \sum_{l = 1}^k u_l(p)^2$.
\end{lemma}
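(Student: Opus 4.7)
The plan is to specialize Lemma \ref{lem:mean curvature} to the $K$-invariant setting by direct substitution of the three formulas supplied by Lemma \ref{lem:laplacian}. Since $u$ is $K$-invariant and $K$ acts on $\overline{M}={}_r G/K \times I$ by isometries preserving the graph of $u$, the mean curvature $H$ is itself $K$-invariant, so $H(k\cdot p)=H(p)$ and it suffices to compute $H$ at a representative point $p=\mathrm{exp}_{eK}(\sum_{l=1}^{k}x^{l}e^{0}_{l})\in \mathrm{exp}_{eK}(C')$, where the simpler formulas of Lemma \ref{lem:laplacian} apply.

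Substituting $\abs{\mathrm{grad}\,u}^{2}=\sum_{l=1}^{k}u_{l}^{2}$ gives the stated expression $w(p)^{2}=r(u(p))^{2}+\sum_{l=1}^{k}u_{l}(p)^{2}$, and immediately yields the third summand $r'(u(p))\sum_{i=1}^{k}u_{i}(p)^{2}/w(p)^{3}$; the last term $n(r'\circ u)/w$ of Lemma \ref{lem:mean curvature} transfers unchanged. The Hessian-like piece $\frac{1}{2(r\circ u)w^{3}}g_{M}(\mathrm{grad}(\abs{\mathrm{grad}\,u}^{2}),\mathrm{grad}\,u)$ becomes $\frac{1}{(r\circ u)w^{3}}\sum_{i,j=1}^{k}u_{ij}u_{i}u_{j}$ by the second formula of Lemma \ref{lem:laplacian}, while the Laplacian term $-\frac{1}{(r\circ u)w}\Delta^{M}u$ splits, via the third formula, into the diagonal piece $-\frac{1}{(r\circ u)w}\sum_{i=1}^{k}u_{ii}$ and the orbit contribution $-\frac{1}{(r\circ u)w}\sum_{\lambda\in\Delta_{+}}\sum_{j=1}^{k}\frac{m_{\lambda}\lambda(e^{0}_{j})u_{j}}{\tan(\sum_{l=1}^{k}\lambda(e^{0}_{l})x^{l})}$, which is exactly the orbit summand in the conclusion.

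Finally, combining the diagonal Laplacian piece with the Hessian-like piece produces the compact form
\[
-\frac{1}{(r\circ u)w}\sum_{i,j=1}^{k}\left(\delta_{ij}-\frac{u_{i}u_{j}}{w^{2}}\right)u_{ij},
\]
since $\sum_{i,j}\delta_{ij}u_{ij}=\sum_{i}u_{ii}$ while $\sum_{i,j}(u_{i}u_{j}/w^{2})u_{ij}=\frac{1}{w^{2}}\sum_{i,j}u_{ij}u_{i}u_{j}$. Reassembling the four summands gives the claimed formula. The argument is purely algebraic; the only bookkeeping point to watch is that the radial sums run from $1$ to $k=\dim\mathfrak{a}$ while the overall mean curvature still picks up the factor $n=\dim G/K$ in the last term, reflecting the fact that the orbital directions contribute to $H$ only through the tangential Laplacian correction.
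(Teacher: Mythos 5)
Your proof is correct and is exactly the paper's argument: the paper derives this lemma by direct substitution of the three formulas of Lemma \ref{lem:laplacian} into Lemma \ref{lem:mean curvature}, with the same regrouping of the diagonal Laplacian piece and the Hessian term into the $\left(\delta_{ij}-u_iu_j/w^2\right)u_{ij}$ sum. The $K$-invariance observation justifying evaluation at the representative $p$ is consistent with how the paper records the formula at $k\cdot p$.
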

In the case where $M = G/K$ is a rank one symmetric space, that is, the sphere $S^{n}$, the complex projective space $\mathbb{CP}^{n/2}$, the quaternionic projective space $\mathbb{QP}^{n/4}$ or the Cayley plane $\mathbb{OP}^2$ (then $n = 16$), the restricted root space decomposition \eqref{root decomposition} is as follows: 
\begin{align*}
    \mathfrak{p} = \mathfrak{a} \oplus \mathfrak{p}_{\lambda} \oplus \mathfrak{p}_{2 \lambda},
\end{align*}
where $m_{2 \lambda} = \dim \mathfrak{p}_{2 \lambda}$ is given by
\begin{equation*}
m_{2 \lambda} = 
\begin{cases}
0 & (\text{when } M = S^n)\\
1 & (\text{when } M = \mathbb{CP}^{n/2})\\
3 & (\text{when } M = \mathbb{QP}^{n/4})\\
7 & (\text{when } M = \mathbb{OP}^2).
\end{cases}
\end{equation*}
Hence, $m_{\lambda} = \dim \mathfrak{p}_{\lambda}$ is given by
\begin{equation*}
m_{\lambda} = 
\begin{cases}
n - 1 & (\text{when } M = S^n)\\
n - 2 & (\text{when } M = \mathbb{CP}^{n/2})\\
n - 4 & (\text{when } M = \mathbb{QP}^{n/4})\\
8 & (\text{when } M = \mathbb{OP}^2).
\end{cases}
\end{equation*}
Then we have the following expression of the mean curvature.
\begin{cor}
Let $M = G/K$ be a rank one symmetric space of compact type and $p = \mathrm{exp}_{e K} (x e^0_1) \in \mathrm{exp}_{e K}(C')$. Then, the mean curvature $H$ of $F(M)$ with respect to $-N$ is given by
\begin{align*}
H(k \cdot p)
=& - \frac{r(u(p))}{w(p)^3} u''(p) + \frac{r'(u(p))}{w(p)^3}  u'(p)^2 \\
&- \frac{1}{r(u(p)) w(p)}  \left( \frac{m_{\lambda} \lambda(e^0_1) u'(p)}{\tan \left( \lambda(e^0_1) x \right)} + \frac{2 m_{2 \lambda} \lambda(e^0_1) u'(p)}{\tan \left( 2 \lambda(e^0_1) x \right)} \right) + n \frac{r'(u(p))}{w(p)},
\end{align*}
where $u' \coloneqq (e^0_1)^M (u)$ and $w^2(p) = r(u(p))^2 + u'(p)^2$.
\end{cor}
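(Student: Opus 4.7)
The plan is to obtain this corollary as a direct specialization of Lemma \ref{lem:mean curvature of sym} to the rank one setting. In the rank one case we have $k=\dim\mathfrak{a}=1$, so every sum over indices $i,j=1,\dots,k$ collapses to a single term, and the positive root system consists of at most the two roots $\lambda$ and $2\lambda$ (with the convention $m_{2\lambda}=0$ when $M=S^n$). Writing $x^1=x$, $u'=u_1=(e^0_1)^M(u)$ and $u''=u_{11}=(e^0_1)^M(e^0_1)^M(u)$, I would just substitute these into the formula of Lemma \ref{lem:mean curvature of sym} and simplify.

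First I would handle the Hessian-type term. With $k=1$, the factor in front of $u_{ij}$ becomes
\begin{equation*}
\delta_{11}-\frac{u_1 u_1}{w^2}=1-\frac{(u')^2}{w^2}=\frac{r(u)^2}{w^2},
\end{equation*}
since $w^2=r(u)^2+(u')^2$. Multiplying by $-\frac{1}{r(u)w}$ and $u''$ gives $-\dfrac{r(u)}{w^3}u''$, which matches the first term in the claimed expression. The term $\dfrac{r'(u)}{w^3}\sum_i u_i^2$ becomes $\dfrac{r'(u)}{w^3}(u')^2$ immediately.

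Next I would handle the sum over positive roots. Since $\Delta_+=\{\lambda,2\lambda\}$ and $k=1$, each summand in Lemma \ref{lem:mean curvature of sym} becomes
\begin{equation*}
\frac{m_\mu\,\mu(e^0_1)\,u'(p)}{\tan\bigl(\mu(e^0_1)\,x\bigr)}\quad\text{for }\mu\in\{\lambda,2\lambda\},
\end{equation*}
and using $(2\lambda)(e^0_1)=2\lambda(e^0_1)$ this yields exactly the bracketed expression in the statement. The last term $n\,r'(u)/w$ is inherited without change. Collecting everything produces the claimed formula.

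There is no real obstacle: the entire argument is bookkeeping, with the only small observation being the algebraic simplification $1-(u')^2/w^2=r(u)^2/w^2$ that converts the coefficient $\tfrac{1}{r(u)w}(1-(u')^2/w^2)$ into $\tfrac{r(u)}{w^3}$. The result holds on the open dense set $\mathcal{U}'$ where the frame $\mathcal{F}^M$ was defined, which is sufficient for later use.
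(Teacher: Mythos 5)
Your proposal is correct and follows exactly the paper's (implicit) route: the corollary is just Lemma \ref{lem:mean curvature of sym} specialized to $k=1$ with $\Delta_+=\{\lambda,2\lambda\}$, and your simplification $\frac{1}{r(u)w}\bigl(1-(u')^2/w^2\bigr)=\frac{r(u)}{w^3}$ is the only algebraic step needed. Nothing is missing.
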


\section{Isotropy invariant graphical mean curvature flow}\label{sec:invariant mcf}
In this section, we study the mean curvature flow which is invariant under the $K$-action on $\overline{M}$. 
First, we show the following fact under the general setting.
\begin{lemma}\label{lem:K-equivMCF}
Let $(M, g_M)$ be a closed Riemannian manifold, $\mu \colon K \times M \to M$ an isometric action of a compact Lie group $K$ on $(M, g_M)$, and $\overline{\mu} \colon K \times \overline{M} \to \overline{M}$ the action on the warped product $\overline{M} = {_r M} \times I$ defined by $\overline{\mu}(k, (p,z)) \coloneqq (\mu(k, p), z)$.
\begin{description}
\item{(i)} This action $\overline{\mu} \colon K \times \overline{M} \to \overline{M}$ is isometric.
\item{(ii)} If $F \colon M \times [0,T) \to \overline{M}$ is a mean curvature flow such that $F_0 \coloneqq F(\cdot, 0) \ \colon M \to (\overline{M}, \overline{g})$ is $K$-equivariant (with respect to $\mu$ and $\overline{\mu}$), then $F_t \coloneqq F(\cdot, t)$ is also $K$-equivariant for all $t \in (0,T)$.
\end{description}
\end{lemma}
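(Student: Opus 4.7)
For part (i), the plan is a direct computation using the definition of $\overline{g}$. Since $\overline{\mu}_k$ leaves the $I$-coordinate fixed, one has $\pi_I \circ \overline{\mu}_k = \pi_I$ and $\pi_M \circ \overline{\mu}_k = \mu_k \circ \pi_M$, where I write $\overline{\mu}_k \coloneqq \overline{\mu}(k,\cdot)$ and $\mu_k \coloneqq \mu(k,\cdot)$. Pulling the warped-product metric back term by term, I would compute
\[
\overline{\mu}_k^*\overline{g} = (\pi_I^* r)^2\, \pi_M^*(\mu_k^* g_M) + \pi_I^*(dz \otimes dz),
\]
and since $\mu_k$ is an isometry of $(M,g_M)$ we have $\mu_k^* g_M = g_M$, hence $\overline{\mu}_k^* \overline{g} = \overline{g}$.

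For part (ii), I would argue by uniqueness of the mean curvature flow. Fix $k \in K$ and set $\widetilde{F}(p,t) \coloneqq \overline{\mu}_{k^{-1}}(F(\mu_k(p), t))$. The key claim is that $\widetilde{F}$ is again a mean curvature flow. First, the source reparametrization $p \mapsto \mu_k(p)$ is a diffeomorphism of $M$, so $\widetilde{F}_t(M)$ equals $\overline{\mu}_{k^{-1}}(F_t(M))$ as a set; consequently $\widetilde{F}_t$ is a smooth embedding and its image is the isometric image of $F_t(M)$ under $\overline{\mu}_{k^{-1}}$. Using naturality of the mean curvature vector under the ambient isometry $\overline{\mu}_{k^{-1}}$ (granted by (i)) together with invariance of the MCF equation under source reparametrizations, one checks
\[
\partial_t \widetilde{F}(p,t) = d\overline{\mu}_{k^{-1}}\!\left( \mathbf{H}^F_t(F(\mu_k(p),t)) \right) = \mathbf{H}^{\widetilde{F}}_t(\widetilde{F}(p,t)).
\]
The $K$-equivariance of $F_0$ then gives $\widetilde{F}(p,0) = \overline{\mu}_{k^{-1}}(F_0(\mu_k(p))) = \overline{\mu}_{k^{-1}}(\overline{\mu}_k(F_0(p))) = F_0(p)$, so $\widetilde{F}$ and $F$ are two mean curvature flows starting from the same initial hypersurface. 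Invoking the standard short-time uniqueness theorem for MCF of smooth closed hypersurfaces in a Riemannian ambient, I conclude $\widetilde{F}_t = F_t$ on $[0,T)$, which unwinds to $F_t \circ \mu_k = \overline{\mu}_k \circ F_t$ for every $t$ and every $k \in K$.

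The main subtlety lies in verifying carefully that $\widetilde{F}$ is indeed a MCF with respect to the same equation \eqref{eq:MCF}. Two points must be kept straight: source reparametrization by $\mu_k$ changes the parametrization but preserves the image, so the mean curvature vector at the image point is unaffected, and post-composition by the ambient isometry $\overline{\mu}_{k^{-1}}$ transports the mean curvature vector of $F_t(M)$ to that of $\overline{\mu}_{k^{-1}}(F_t(M))$ via its differential. Once these two naturality statements are isolated, the rest of the argument is a clean application of uniqueness.
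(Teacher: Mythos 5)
Your proposal is correct and takes essentially the same approach as the paper: part (i) is the identical pullback computation, and part (ii) is the same uniqueness argument, with the only cosmetic difference that the paper compares the two flows $\overline{\mu}^k \circ F$ and $F(\mu^k(\cdot),\cdot)$ directly, whereas you fold both transformations into the single flow $\overline{\mu}_{k^{-1}} \circ F(\mu_k(\cdot),\cdot)$ and compare it with $F$.
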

\begin{proof}
First, we show the statement (i).
Set $\mu^k \coloneqq \mu(k, \cdot)$ and  $\overline{\mu}^k \coloneqq \overline{\mu}(k, \cdot)$ for $k \in K$. We have
\begin{align*}
{\overline{\mu}^k}^* \overline{g} 
&= {\overline{\mu}^k}^* ( ({\pi_I}^* r)^2 ({\pi_M}^* g_M) + \pi_I^* (dz \otimes dz)) \\
&=  ({\pi_I}^* r)^2 ( {\pi_M}^*{\mu^k}^* g_M) + \pi_I^* (dz \otimes dz) \\
&=  ({\pi_I}^* r)^2 ({\pi_M}^* g_M) + \pi_I^* (dz \otimes dz) \\
&= \overline{g},
\end{align*}
hence the action $\overline{\mu}$ is isometric. 
Next we show the statemet (ii). 
Let $F$ be the mean curvature flow as in the statement (ii).
Then define $\widehat{F} \colon M \times [0,T) \to \overline{M}$ by $\widehat{F}  \coloneqq \overline{\mu}^k \circ F$. The map $\widehat{F}$ is also the mean curvature flow, in fact, $\widehat{F}$ satisfies 
\begin{equation*}
\frac{\partial \widehat{F}}{\partial t} 
= d \overline{\mu}^k \left( \frac{\partial F}{\partial t} \right) 
= d \overline{\mu}^k (\bm{H}_t) 
= \widehat{\bm{H}}_t,
\end{equation*}
where $\bm{H}_t$ and $\widehat{\bm{H}}_t$ are the mean curvature vectors of $F_t(M)$ and $\widehat{F}_t(M)$, respectively. 
On the other hand, $\widetilde{F} \coloneqq F(\mu^k(\cdot), \cdot)$ is also the mean curvature flow. 
The initial conditions $\widehat{F}_0$ and $\widetilde{F}_0$ coincide, hence we have $\widehat{F} = \widetilde{F}$ by the unique existence of the solution of the initial value problem of the mean curvature flow equation. 
This implies that $F_t$ is $K$-equivariant for all $t \in (0,T)$.
\end{proof}

Let $\psi_t \coloneqq \pi_M \circ F_t \colon M \to M$. 
Assume the initial condition is given by $F_0 (p) = (p, u_0(p))$ ($p \in M$), where $u_0 \colon M \to I$ is a smooth function. This guarantees that $\psi_0 = \pi_M \circ F_0 = \mathrm{id}_M$, and there exists a positive constant $\varepsilon_1$ such that $\psi_t$ is a diffeomorphism for $t \in [0, \varepsilon_1)$. 
We define 
\begin{equation*}
\widehat{T} \coloneqq \sup \left\{ t \in [0,T) \,\vert\, \psi_t \text{ is a diffeomorphism} \right\} (> 0).
\end{equation*}
Then, $\psi_t$ is a diffeomorphism for all $t \in [0,\widehat{T})$, and set $\phi_t \coloneqq (\psi_t)^{-1}$ and $\phi(\cdot, t) \coloneqq \phi_t (\cdot)$. 
The map $\phi \colon M \times [0, \widehat{T}) \to M$ is smooth. 
By the construction of $\phi_t$, it satisfies $\pi_M \circ F(\phi_t(p), t) = p$ ($p \in M$), and hence
\begin{equation*}
F(\phi_t(p), t) = (p, \pi_I \circ F(\phi_t(p), t))
\end{equation*}
holds. 
Define a function $u$ by $u(p, t) \coloneqq \pi_I \circ F(\phi_t(p), t)$  ($p \in M$). This function is the desired function. 
\begin{lemma}\label{lem:K-equivfunct}
Let $u_0 \colon M \to I$ be a smooth $K$-invariant function and set $F_0(p) \coloneqq (p, u_0(p))$  ($p \in M$). 
Assume that $F \colon M \times [0,T) \to \overline{M}$ is a mean curvature flow with $F(\cdot, 0) = F_0$ and $u \colon M \times [0,\widehat{T}) \to I$ is a function obtained from $F$ as above. 
Then, $u_t \coloneqq u(\cdot, t) \colon M \to I$ is $K$-invariant for all $t \in [0,\widehat{T})$.
\end{lemma}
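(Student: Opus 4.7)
The plan is to reduce the $K$-invariance of $u_t$ to the $K$-equivariance of $F_t$ provided by Lemma \ref{lem:K-equivMCF}(ii), then unwind the definition $u(p,t) = \pi_I \circ F(\phi_t(p), t)$ using that $\pi_I$ is insensitive to the action $\overline{\mu}$.

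First I would verify that the hypothesis of Lemma \ref{lem:K-equivMCF}(ii) is met, namely that $F_0$ is $K$-equivariant. Writing $\mu^k = \mu(k, \cdot)$ and $\overline{\mu}^k = \overline{\mu}(k, \cdot)$, the $K$-invariance of $u_0$ gives
\begin{equation*}
F_0(\mu^k(p)) = (\mu^k(p), u_0(\mu^k(p))) = (\mu^k(p), u_0(p)) = \overline{\mu}^k(p, u_0(p)) = \overline{\mu}^k(F_0(p)),
\end{equation*}
so $F_0 \circ \mu^k = \overline{\mu}^k \circ F_0$ for every $k \in K$. Applying Lemma \ref{lem:K-equivMCF}(ii) yields $F_t \circ \mu^k = \overline{\mu}^k \circ F_t$ for all $t \in [0, T)$.

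Next I would transfer this equivariance to $\psi_t = \pi_M \circ F_t$ and hence to $\phi_t = \psi_t^{-1}$. Because $\overline{\mu}^k$ preserves the $I$-factor and acts as $\mu^k$ on the $M$-factor, $\pi_M \circ \overline{\mu}^k = \mu^k \circ \pi_M$, and therefore
\begin{equation*}
\psi_t \circ \mu^k = \pi_M \circ F_t \circ \mu^k = \pi_M \circ \overline{\mu}^k \circ F_t = \mu^k \circ \psi_t.
\end{equation*}
Taking inverses gives $\phi_t \circ \mu^k = \mu^k \circ \phi_t$ on $M$ for every $t \in [0, \widehat{T})$ and every $k \in K$.

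Finally I would compute $u_t(\mu^k(p))$ directly. Since $\pi_I \circ \overline{\mu}^k = \pi_I$,
\begin{equation*}
u(\mu^k(p), t) = \pi_I \circ F(\phi_t(\mu^k(p)), t) = \pi_I \circ F(\mu^k(\phi_t(p)), t) = \pi_I \circ \overline{\mu}^k (F(\phi_t(p), t)) = \pi_I \circ F(\phi_t(p), t) = u(p, t),
\end{equation*}
which is the desired $K$-invariance of $u_t$. There is no serious obstacle here: the whole argument is just careful bookkeeping with the two commuting actions $\mu$ and $\overline{\mu}$ and the observation that $\pi_I$ annihilates the latter. The only point requiring a moment's care is the passage from $K$-equivariance of $\psi_t$ to that of $\phi_t$, which is automatic once one has confirmed $\psi_t$ is a diffeomorphism in the range $t \in [0, \widehat{T})$, a fact already built into the definition of $\widehat{T}$.
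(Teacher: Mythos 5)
Your proof is correct and follows essentially the same route as the paper: invoke Lemma \ref{lem:K-equivMCF}(ii) to get $K$-equivariance of $F_t$, transfer it to $\psi_t$ and hence $\phi_t$, and conclude via $\pi_I \circ \overline{\mu}^k = \pi_I$. The only difference is that you explicitly verify the $K$-equivariance of $F_0$ from the $K$-invariance of $u_0$, a small check the paper leaves implicit.
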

\begin{proof}
We use the same notation as in Lemma \ref{lem:K-equivMCF} and the preceding argument. 
From Lemma \ref{lem:K-equivMCF}, we observe that the mean curvature flow $F$ is $K$-equivariant.
First, we show that the diffeomorphisms $\phi_t$ are $K$-equivariant for all $t \in [0,\widehat{T})$. 
Using the $K$-equivariance of $\pi_M$ and $F_t$, we have $d \mu^k \circ d (\pi_M \circ F_t) = d (\pi_M \circ F_t) \circ d \mu^k$. Hence, 
\begin{align*}
\mu^k \circ \psi_t = \mu^k \circ \pi_M \circ F_t = \pi_M \circ \overline{\mu}^k \circ F_t = \pi_M \circ F_t \circ \mu^k = \psi_t \circ \mu^k
\end{align*}
Thus, we have
\begin{equation*}
\mu^k \circ \phi_t = \phi_t \circ \mu^k.
\end{equation*}
Then, from the $K$-equivariance of $\phi_t$, we obtain the $K$-invariance of $u$ as follows:
\begin{align*}
u_t \circ \mu^k 
= \pi_I \circ F_t \circ \phi_t \circ \mu^k
= \pi_I \circ F_t \circ \mu^k \circ \phi_t
= \pi_I \circ \overline{\mu}^k \circ F_t \circ \phi_t 
= \pi_I \circ F_t \circ \phi_t
= u_t.
\end{align*}
\end{proof}
Now, by applying Lemma \ref{lem:K-equivfunct} to the case of the isotropy action $K \curvearrowright G/K$ of a symmetric space $M = G/K$ of compact type, we will derive the flow equation for $u_t$ corresponding to the mean curvature flow equation.

Let $u_0 \colon M \to I$ be a $K$-invariant smooth function and $F \colon M \times [0,T) \to \overline{M}$ a mean curvature flow with $F_0(p) = (p, u_0(p))$ ($p \in M$). Then, from Lemma \ref{lem:K-equivfunct}, we obtain the $K$-invariant smooth functions $u_t \colon M \to I$ ($t \in [0,\widehat{T})$). 
Similarly, we consider the graph of $u_t$, denoted by $\widehat{F}(p,t) \coloneqq (p, u_t(p))$ ($p \in M$), and we derive the flow equation of $u \colon M \times [0,T) \to I$ defined by $u(p,t) \coloneqq u_t(p)$ ($(p,t) \in M \times I$) from the following equation
\begin{equation*}
    \overline{g}\left( \frac{\partial \widehat{F}}{\partial t}, -N_t \right) = H_t.
\end{equation*}
The left-hand side is given by
\begin{align*}
\overline{g}\left( \frac{\partial \widehat{F}}{\partial t}, -N_t \right) 
= \overline{g} \left( \frac{\partial u}{\partial t} \overline{\partial_z},   -\sum_{j = 1}^k \frac{u_j}{w} \overline{E}^0_j + \frac{r \circ u}{w} \overline{\partial_{z}}   \right)
=- \frac{r \circ u}{w} \frac{\partial u}{\partial t}.
\end{align*}
Hence, from Lemma \ref{lem:mean curvature of sym}, we have
\begin{equation}\label{eq:flow equation}
\begin{aligned}
\frac{\partial u}{\partial t}(p,t) 
&=\frac{1}{r(u(p,t))^2} \sum_{i, j = 1}^k \left( \delta_{i j} - \frac{u_i(p,t) u_j(p,t)}{w(p,t)^2}\right) u_{i j}(p,t) - \frac{r'(u(p,t))}{r(u(p,t))} \sum_{i = 1}^k  \frac{u_i(p,t)^2}{w(p,t)^2} \\
&+ \frac{1}{r(u(p,t))^2}  \sum_{\lambda \in \Delta_+} \sum_{j = 1}^k \frac{m_{\lambda} \lambda(e^0_j) u_j(p,t)}{\tan \left( \sum_{l = 1}^k  \lambda(e^0_l) x^l \right)} - n \frac{r'(u(p,t))}{r(u(p,t))}
\end{aligned}
\end{equation}
on the open subset $\mathcal{U'} \times I \subset \overline{M}$, where $w(p,t)^2 \coloneqq r(u(p,t))^2 + \sum_{l=1}^k u_l(p,t)^2$. 
Define a function $\widehat{u} \colon \mathbb{R}^k \times [0,\widehat{T}) \to I$ by 
\begin{equation}\label{def:u-hat}
\widehat{u}(x^1, \dots, x^k, t) \coloneqq u \left( \mathrm{exp}_{e K}\left(\sum_{l = 1}^k x^l e^0_l \right), t \right)
\end{equation}
for $\sum_{l = 1}^k x^l e^0_l \in \mathfrak{a}$. 
In particular, for $\sum_{l = 1}^k x^l e^0_l \in C'$, we have
\begin{align*}
\widehat{u}_i(x,t) \coloneqq \frac{\partial \widehat{u}}{\partial x^i}(x, t) &=  \frac{\partial}{\partial x^i}u \left( \mathrm{exp}_{e K} \left(\sum_{l = 1}^k x^l e^0_l \right), t \right) = ((e^0_i)^M (u))(p, t) = u_i (p,t) \\
\widehat{u}_{ j i} (x,t) \coloneqq \frac{\partial^2 \widehat{u}}{\partial x^i \partial x^j}(x,t) &= \frac{\partial}{\partial x^i} u_j \left( \mathrm{exp}_{e K} \left(\sum_{l = 1}^k x^l e^0_l \right), t \right) = u_{ j i} (p, t),
\end{align*}
where $x = (x^1, \dots, x^k)$ and $p = \mathrm{exp}_{e K} \left(\sum_{l = 1}^k x^l e^0_l \right)$.
Therefore, we obtain 
\begin{align*}
\frac{\partial \widehat{u}}{\partial t}(x,t)
&= \frac{1}{r(\widehat{u}(x,t))^2} \sum_{i, j = 1}^k \left(\delta_{i j} - \frac{\widehat{u}_i(x,t) \widehat{u}_j(x,t)  }{\widehat{w}(x,t)^2} \right) \widehat{u}_{i j}(x,t) 
- \frac{r'(\widehat{u}(x,t))}{r(\widehat{u}(x,t)} \sum_{i = 1}^k \frac{\widehat{u}_i(x,t)^2}{\widehat{w}(x,t)^2}  \\
&+ \frac{1}{r(\widehat{u}(x,t))^2} \sum_{\lambda \in \Delta_+} \sum_{i = 1}^k \frac{m_{\lambda} \lambda(e^0_i)  \widehat{u}_i(x,t) }{\tan \left( \sum_{l = 1}^k  \lambda(e^0_l) x^l \right)} -n \frac{r'(\widehat{u}(x,t))}{r(\widehat{u}(x,t))},
\end{align*}
where $\widehat{w}(x,t)^2 \coloneqq r(\widehat{u}(x,t))^2 + \sum_{l=1}^k \widehat{u}_l(x,t)^2$.
 
Let $\widehat{C}$ denote a domain defined by
\begin{align}\label{def:C-hat}
\widehat{C} \coloneqq \left\{ (x^1, \dots, x^k) \in \mathbb{R}^k \biggm\vert   \sum_{l = 1}^k x^l e^0_l \in C' \right\}.
\end{align}
The closure of $\widehat{C}$ is a solid $k$-polytope and the boundary $\partial \widehat{C}$ is consisted of $(k+1)$ pieces of faces of the $k$-polytope. 
Let $\widehat{W}$ be the group generated by the reflections with respect to $(k-1)$-dimensional hyperplanes containing faces of the polytope. 
Each orbit of the isotropy action intersects with $\mathrm{exp}_{e K}(\overline{C'})$ only once, where $\overline{C'}$ denotes the closure of $C'$. For any $p \in M$, the set 
\begin{equation*}
\left\{ x \in \mathbb{R}^k \,\biggm\vert\,  \mathrm{exp}_{e K}\left(\sum_{l = 1}^k x^l e^0_l \right) \in \mathrm{exp}_{e K} (\mathfrak{a}) \cap (K \cdot p)  \right\}
\end{equation*}
is $\widehat{W}$-invariant. 
Since the function $u$ is $K$-invariant, the function $\widehat{u}(x, t) = u\left( \mathrm{exp}_{e K}\left(\sum_{l = 1}^k x^l e^0_l \right), t \right)$ defined on $\mathbb{R}^k \times [0,\widehat{T})$ is $\widehat{W}$-invariant. 
This together with \eqref{eq:flow equation} implies the following fact.
\begin{lemma}\label{lem:rank k}
Under the same hypothesis of Lemma \ref{lem:K-equivfunct}, let $\widehat{u} \colon \mathbb{R}^k \times [0, \widehat{T}) \to I$ be a smooth function defined by (\ref{def:u-hat}). 
Then, $\widehat{u}$ is a $\widehat{W}$-invariant function. 
Furthermore, $\widehat{u}$ satisfies the following:
\begin{equation}\label{eq:reduced}
\begin{aligned}
\frac{\partial \widehat{u}}{\partial t}(x,t)
&= \frac{1}{r(\widehat{u}(x,t))^2} \sum_{i, j = 1}^k \left(\delta_{i j} - \frac{\widehat{u}_i(x,t) \widehat{u}_j(x,t)  }{\widehat{w}(x,t)^2} \right) \widehat{u}_{i j}(x,t) 
- \frac{r'(\widehat{u}(x,t))}{r(\widehat{u}(x,t)} \sum_{i = 1}^k \frac{\widehat{u}_i(x,t)^2}{\widehat{w}(x,t)^2}  \\
&+ \frac{1}{r(\widehat{u}(x,t))^2} \sum_{\lambda \in \Delta_+} \sum_{i = 1}^k \frac{m_{\lambda} \lambda(e^0_i)  \widehat{u}_i(x,t) }{\tan \left( \sum_{l = 1}^k  \lambda(e^0_l) x^l \right)} -n \frac{r'(\widehat{u}(x,t))}{r(\widehat{u}(x,t))}
\end{aligned}
\end{equation}
for $(x,t) \in ( \widehat{W} \cdot \widehat{C}) \times [0,\widehat{T})$, where $\widehat{w}(x,t)^2 \coloneqq r(\widehat{u}(x,t))^2 + \sum_{l=1}^k \widehat{u}_l(x,t)^2$. 
\end{lemma}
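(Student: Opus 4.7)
The plan is to split the proof into two parts: establishing the $\widehat{W}$-invariance of $\widehat{u}$, and then deriving the flow equation \eqref{eq:reduced} from \eqref{eq:flow equation} via a chain-rule identification.

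For the $\widehat{W}$-invariance, I will appeal to the standard structure theory of symmetric spaces: two points $x, x' \in \mathbb{R}^k$ in the same $\widehat{W}$-orbit produce points $\mathrm{exp}_{eK}(\sum_l x^l e^0_l)$ and $\mathrm{exp}_{eK}(\sum_l (x')^l e^0_l)$ lying in the same $K$-orbit in $G/K$, since each $K$-orbit meets the flat $\mathrm{exp}_{eK}(\mathfrak{a})$ in a single $\widehat{W}$-orbit with $\mathrm{exp}_{eK}(\overline{C'})$ as a fundamental domain. Combined with the $K$-invariance of $u_t$ established in Lemma \ref{lem:K-equivfunct}, this immediately yields the $\widehat{W}$-invariance of $\widehat{u}(\cdot, t)$.

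For the flow equation, the key chain-rule identifications have essentially been recorded in the paragraph preceding the statement: for $x = (x^1,\dots,x^k) \in \widehat{C}$ and $p = \mathrm{exp}_{eK}(\sum_l x^l e^0_l)$, one has
\[
\widehat{u}(x,t) = u(p,t), \qquad \widehat{u}_i(x,t) = u_i(p,t), \qquad \widehat{u}_{ij}(x,t) = u_{ij}(p,t).
\]
These identifications hold throughout $\widehat{C}$ because the frame $(e^0_i)^M$ was constructed in Lemma \ref{lem:frame on M} using transvections along geodesics in $\mathrm{exp}_{eK}(\mathfrak{a})$, so its integral curves inside $\mathrm{exp}_{eK}(\mathfrak{a})$ coincide with the coordinate curves $x^i \mapsto \mathrm{exp}_{eK}(\sum_l x^l e^0_l)$; the Hessian identification then follows from iterating the first-derivative identity together with the vanishing $\nabla^M_{(e^0_i)^M}(e^0_j)^M = 0$ on the flat. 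Substituting these identities into \eqref{eq:flow equation} produces \eqref{eq:reduced} on $\widehat{C} \times [0,\widehat{T})$.

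To extend the equation from $\widehat{C}$ to $\widehat{W} \cdot \widehat{C}$, I invoke the $\widehat{W}$-invariance already established: the left-hand side of \eqref{eq:reduced} is $\widehat{W}$-invariant, and a direct check shows the right-hand side is as well (the Hessian trace and gradient terms transform as scalars under reflections, and the cotangent-type sums transform consistently because the positive root system and its reflection images match up appropriately). The only subtlety to monitor is that the $\tan$-singularities lie exactly on the walls of $\partial \widehat{C}$, so the equation is to be understood pointwise on each open chamber of $\widehat{W} \cdot \widehat{C}$. I do not expect any substantive obstacle beyond this bookkeeping: the heart of the argument is really carried by Lemmas \ref{lem:K-equivfunct} and \ref{lem:frame on M}, and the present lemma is essentially a transcription of \eqref{eq:flow equation} into the Weyl-chamber coordinates on $\mathfrak{a}$.
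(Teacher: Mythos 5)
Your proposal is correct and follows essentially the same route as the paper: the $\widehat{W}$-invariance is deduced from the $K$-invariance of $u_t$ (Lemma \ref{lem:K-equivfunct}) together with the fact that each $K$-orbit meets $\mathrm{exp}_{eK}(\mathfrak{a})$ in a $\widehat{W}$-orbit, and the equation \eqref{eq:reduced} is obtained by the chain-rule identifications $\widehat{u}_i = u_i$, $\widehat{u}_{ij} = u_{ij}$ substituted into \eqref{eq:flow equation}. Your explicit check that the right-hand side is $\widehat{W}$-invariant (so the equation propagates from $\widehat{C}$ to $\widehat{W}\cdot\widehat{C}$) is a point the paper leaves implicit, but it is consistent with, not divergent from, the paper's argument.
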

In particular, we consider the case where $M = G/K$ is of rank one. 
Then we have
\begin{align*}
    \mathfrak{p} = \mathfrak{a} \oplus \mathfrak{p}_{\lambda} \oplus \mathfrak{p}_{2 \lambda},
\end{align*}
and
\begin{align*}
    C' = \{ v \in \mathfrak{a} \mid 0 < \lambda(v) < \pi, 0 < 2 \lambda(v) < \pi  \} = \{ v \in \mathfrak{a} \mid 0 < \lambda(v) < \pi/2  \}.
\end{align*}
Hence, the domain $\widehat{C}$ defined by (\ref{def:C-hat}) is given by
\begin{align*}
\widehat{C} = (0,\pi/2\lambda(e^0_1)),
\end{align*}
where $e^0_1$ is the unit vector in $\mathfrak{a}$ with $\lambda(e^0_1) > 0$.
\begin{cor}\label{cor:rank one}
Under the hypothesis of Lemma \ref{lem:rank k}, assume that $G/K$ is of rank one. 
Then we have
\begin{equation}\label{eq:rank one}
\begin{aligned}
\frac{\partial \widehat{u}}{\partial t}(x,t) 
&= \frac{ \widehat{u}''(x,t) }{r(\widehat{u}(x,t))^2 + \widehat{u}'(x,t)^2} 
- \frac{r'(\widehat{u}(x,t))}{r(\widehat{u}(x,t))}\frac{ \widehat{u}'(x,t)^2}{r(\widehat{u}(x,t))^2 + \widehat{u}'(x,t)^2}  \\
&+ \frac{\widehat{u}'(x,t)}{r(\widehat{u}(x,t))^2} \biggl( \frac{\lambda(e^0_1) m_{\lambda} }{\tan( \lambda(e^0_1) x)}  + \frac{ 2 \lambda(e^0_1) m_{2 \lambda} }{\tan(2 \lambda(e^0_1) x)} \biggr)
-n \frac{r'(\widehat{u}(x,t))}{r(\widehat{u}(x,t))}
\end{aligned}
\end{equation}
for $(x,t) \in (0,\pi/2\lambda(e^0_1)) \times [0, \widehat{T})$. 
By the $\widehat{W}$-invariance of $\widehat{u}$, $\widehat{u}$ satisfies the following boundary conditions:
\begin{equation*}
    \widehat{u}'(0,t) = \widehat{u}'(\pi/2 \lambda(e^0_1), t) = 0.
\end{equation*}
\end{cor}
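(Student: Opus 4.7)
The plan is to specialize Lemma \ref{lem:rank k} to the case $k = 1$ and then extract the boundary conditions from the $\widehat{W}$-invariance. Since the rank is one, we have $\dim \mathfrak{a} = 1$, so the only basis vector for $\mathfrak{a}$ is $e^0_1$, and the sums $\sum_{i,j=1}^k$ in \eqref{eq:reduced} collapse to a single term with $i = j = 1$. Writing $\widehat{u}' = \widehat{u}_1$ and $\widehat{u}'' = \widehat{u}_{11}$, the first sum becomes
\[
\sum_{i,j=1}^1 \left( \delta_{ij} - \frac{\widehat{u}_i \widehat{u}_j}{\widehat{w}^2} \right) \widehat{u}_{ij} = \left(1 - \frac{(\widehat{u}')^2}{\widehat{w}^2} \right) \widehat{u}'' = \frac{r(\widehat{u})^2}{r(\widehat{u})^2 + (\widehat{u}')^2}\, \widehat{u}'',
\]
which, after dividing by the prefactor $1/r(\widehat{u})^2$, yields the first term on the right-hand side of \eqref{eq:rank one}. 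The second sum in \eqref{eq:reduced} becomes $(\widehat{u}')^2 / \widehat{w}^2$, producing the second term of \eqref{eq:rank one}.

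For the third term, I will use the rank-one restricted root space decomposition $\mathfrak{p} = \mathfrak{a} \oplus \mathfrak{p}_\lambda \oplus \mathfrak{p}_{2\lambda}$ stated just before Corollary 4.5 in the excerpt, so $\Delta_+ = \{\lambda, 2\lambda\}$. Then
\[
\sum_{\mu \in \Delta_+} \sum_{i=1}^1 \frac{m_\mu\, \mu(e^0_1)\, \widehat{u}_i}{\tan\bigl(\mu(e^0_1) x\bigr)} = \frac{m_\lambda \lambda(e^0_1) \widehat{u}'}{\tan(\lambda(e^0_1) x)} + \frac{m_{2\lambda}\, 2\lambda(e^0_1)\, \widehat{u}'}{\tan(2\lambda(e^0_1) x)},
\]
which is exactly the third bracketed term in \eqref{eq:rank one}. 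The last term $-n r'(\widehat{u})/r(\widehat{u})$ is unchanged from \eqref{eq:reduced}. Thus \eqref{eq:rank one} follows directly on the domain $(0, \pi/2\lambda(e^0_1)) \times [0,\widehat{T})$, which is exactly the domain $\widehat{C}$ computed just before the corollary.

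For the boundary conditions, I would argue via the $\widehat{W}$-invariance established in Lemma \ref{lem:rank k}. In the rank-one situation, $\widehat{C} = (0, \pi/2\lambda(e^0_1))$ is a one-dimensional simplex whose boundary consists of the two points $0$ and $\pi/2\lambda(e^0_1)$, and $\widehat{W}$ is the dihedral-type group generated by the two reflections $x \mapsto -x$ and $x \mapsto \pi/\lambda(e^0_1) - x$ fixing these endpoints. Smoothness of $\widehat{u}(\cdot, t)$ together with $\widehat{u}(-x, t) = \widehat{u}(x, t)$ forces $\widehat{u}'(0, t) = 0$, and similarly the invariance under the second reflection forces $\widehat{u}'(\pi/2\lambda(e^0_1), t) = 0$.

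The corollary is essentially a direct specialization, so I do not expect a serious obstacle; the only points requiring minor care are correctly handling the collapse of the matrix $(\delta_{ij} - \widehat{u}_i \widehat{u}_j/\widehat{w}^2)$ to a scalar and verifying that the $\widehat{W}$-action at the endpoints is by reflection (so that smooth invariant functions have vanishing derivative there). Neither step involves a genuine difficulty, so the proof will be short and computational.
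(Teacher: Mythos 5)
Your proof is correct and takes essentially the same route as the paper, which presents the corollary as a direct specialization of Lemma \ref{lem:rank k} to $k=1$ with $\Delta_+=\{\lambda,2\lambda\}$ and $\widehat{C}=(0,\pi/2\lambda(e^0_1))$. Your algebraic collapse of the sums and your derivation of the Neumann boundary conditions from the reflections generating $\widehat{W}$ both check out.
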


\section{Proof of Theorems \ref{thm:corHZZ} and \ref{thm:corF}}\label{sec:proof}
In this section, we prove Theorems \ref{thm:corHZZ} and \ref{thm:corF} stated in Introduction. 
We shall use the notations in Section \ref{sec:invariant mcf}.
From the equation \eqref{eq:rank one}, the derivative $\widehat{u}'$ satisfies
\begin{equation*}
\begin{aligned}
\frac{\partial \widehat{u}'}{\partial t}(x,t) 
=& \frac{\widehat{u}'''(x,t)}{r(\widehat{u}(x,t))^2 + \widehat{u}'(x,t)^2} 
+ b(x, t) \widehat{u}''(x,t) -n \left( \frac{r'}{r} \right)' \hspace{-1.5mm} (\widehat{u}(x,t)) \widehat{u}'(x,t) \, \\
&- \left( \frac{r'}{r} \right)' \hspace{-1.5mm} (\widehat{u}(x,t)) \, \frac{\widehat{u}'(x,t)^3 }{ r(\widehat{u}(x,t))^2 + \widehat{u}'(x,t)^2} 
+ \frac{2 r'(\widehat{u}(x,t))^2 \widehat{u}'(x,t)^3 }{ ( r(\widehat{u}(x,t))^2 + \widehat{u}'(x,t)^2)^2} \\
&-\frac{\lambda(e^0_1) m_{\lambda} \widehat{u}'(x,t) }{r(\widehat{u}(x,t))^2}  \left( \frac{2 \widehat{u}'(x,t)}{\tan (\lambda(e^0_1) x)} \frac{r'(\widehat{u}(x,t))}{r(\widehat{u}(x,t))} + \frac{\lambda(e^0_1)}{\sin^2 (\lambda(e^0_1) x)} \right) \\
&-\frac{4 \lambda(e^0_1) m_{2 \lambda} \widehat{u}'(x,t) }{r(\widehat{u}(x,t))^2}  \left( \frac{\widehat{u}'(x,t)}{\tan (2 \lambda(e^0_1) x)} \frac{r'(\widehat{u}(x,t))}{r(\widehat{u}(x,t))} + \frac{\lambda(e^0_1)}{\sin^2 ( 2 \lambda(e^0_1) x)} \right),
\end{aligned}
\end{equation*}
where $b(x,t)$ is a certain smooth function. 
Hence, we obtain the following lemma.
\begin{lemma}\label{lem:mu ineq}
Let $\mu(x,t) \coloneqq \widehat{u}'(x,t)^2$, then the following inequality holds:
\begin{align*}
\frac{\partial \mu}{\partial t}(x,t) 
\leq& \frac{\mu''(x,t)}{r(\widehat{u}(x,t))^2  + \mu(x,t)} + \beta(x,t) \mu'(x,t)  - 2 n \left( \frac{r'}{r} \right)' \hspace{-1.5mm} (\widehat{u}(x,t)) \mu(x,t) \\
&- \left( \frac{r'}{r} \right)' \hspace{-1.5mm} (\widehat{u}(x,t)) \, \frac{ 2 \mu(x,t)^2 }{ r(\widehat{u}(x,t))^2 + \mu(x,t)} 
+  \frac{2 (n+1) \mu(x,t)^2}{r(\widehat{u}(x,t))^2} \frac{r'(\widehat{u}(x,t))^2}{r(\widehat{u}(x,t))^2},
\end{align*}
where $\beta(x,t)$ is a certain smooth function. 
\end{lemma}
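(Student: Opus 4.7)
The plan is to multiply the evolution equation for $\widehat{u}'$ displayed just above the lemma by $2\widehat{u}'(x,t)$, so that the left-hand side becomes $\partial\mu/\partial t$, and then re-express each term on the right in terms of $\mu$, $\mu'$, $\mu''$. The key identities are $\mu' = 2\widehat{u}'\widehat{u}''$ and $2\widehat{u}'\widehat{u}''' = \mu'' - 2(\widehat{u}'')^2$; using these, the leading-order term $\widehat{u}'''/(r^2+(\widehat{u}')^2)$ contributes $\mu''/(r^2+\mu) - 2(\widehat{u}'')^2/(r^2+\mu)$, and dropping the non-positive piece $-2(\widehat{u}'')^2/(r^2+\mu)$ produces the desired inequality. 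The term $b\,\widehat{u}''$ converts to $\beta\mu'$ with $\beta \coloneqq b$, while the linear and cubic terms in $\widehat{u}'$ convert directly to $-2n(r'/r)'(\widehat{u})\mu$ and $-(r'/r)'(\widehat{u})\cdot 2\mu^2/(r^2+\mu)$ respectively.

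The real content lies in estimating the remaining contributions, which after multiplication by $2\widehat{u}'$ read
\begin{equation*}
\frac{4(r')^2 \mu^2}{(r^2+\mu)^2} - \frac{4\lambda m_\lambda \mu \widehat{u}' r'}{r^3 \tan(\lambda x)} - \frac{2\lambda^2 m_\lambda \mu}{r^2\sin^2(\lambda x)} - \frac{8\lambda m_{2\lambda}\mu \widehat{u}' r'}{r^3\tan(2\lambda x)} - \frac{8\lambda^2 m_{2\lambda}\mu}{r^2\sin^2(2\lambda x)},
\end{equation*}
where $\lambda = \lambda(e^0_1)$. The two $\sin^2$-denominator terms are manifestly non-positive and will serve as a reservoir that absorbs the sign-indefinite $\tan$-cross terms. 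I would apply Young's inequality $|ab|\leq \epsilon a^2/2 + b^2/(2\epsilon)$ with $a = 2\mu r'/r^2$ and $b$ proportional to $\widehat{u}'/(r\tan(\cdot))$; choosing $\epsilon = m_\lambda$ (respectively $\epsilon = m_{2\lambda}$) and using $\cos^2 \leq 1$ together with $\sin^2/\tan^2 = \cos^2$, each Young remainder of the form $\mu/\tan^2$ is dominated by the corresponding $\sin^2$-reservoir term, leaving only the $\epsilon a^2/2$ contribution, which equals $2m_\lambda(r')^2\mu^2/r^4$ and $2m_{2\lambda}(r')^2\mu^2/r^4$ respectively.

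Combining these with the crude bound $4(r')^2\mu^2/(r^2+\mu)^2 \leq 4(r')^2\mu^2/r^4$ and using the rank-one identity $n = 1 + m_\lambda + m_{2\lambda}$, the total positive contribution equals $(4 + 2m_\lambda + 2m_{2\lambda})(r')^2\mu^2/r^4 = 2(n+1)(r')^2\mu^2/r^4$, matching the claimed bound. The main obstacle is the Young-inequality bookkeeping: the choice $\epsilon = m_\lambda$ is dictated by demanding that the $\sin^2$-reservoir exactly dominate the remainder uniformly on $(0,\pi/2\lambda)$, and the precise constant $2(n+1)$ emerges only from this sharp balance; any looser split inflates the coefficient. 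In the subcases where $m_\lambda$ or $m_{2\lambda}$ vanishes (e.g.\ $m_{2\lambda}=0$ for $M=S^n$), the corresponding group of $\tan/\sin^2$ terms is absent from the outset and the estimate holds trivially.
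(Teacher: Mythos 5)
Your proposal is correct and follows essentially the same route as the paper: the paper derives the same evolution equation for $\mu$ and then absorbs the $\tan$-cross terms into the $\sin^{-2}$ terms by completing the square (with the cross term written as $-\frac{2\lambda m_\lambda\mu}{r^2}\bigl(\frac{\widehat{u}'}{\sqrt{\lambda}}\frac{r'}{r}\cos(\lambda x)+\frac{\sqrt{\lambda}}{\sin(\lambda x)}\bigr)^2$ plus a remainder bounded by $2m_\lambda\mu^2 (r')^2/r^4$), which is exactly your weighted Young's inequality with $\epsilon=m_\lambda$, $\epsilon=m_{2\lambda}$. The final bookkeeping via $m_\lambda+m_{2\lambda}=n-1$ and $4(r')^2\mu^2/(r^2+\mu)^2\leq 4(r')^2\mu^2/r^4$ is identical.
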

\begin{proof}
The evolution equation for $\mu(x,t) \coloneqq \widehat{u}'(x,t)^2$ is given by 
\begin{equation}\label{eq:mu}
\begin{aligned}
\frac{\partial \mu}{\partial t}(x,t) 
=& \frac{\mu''(x,t)}{r(\widehat{u}(x,t))^2  + \mu(x,t)} + \beta(x,t) \mu'(x,t) - 2 n \left( \frac{r'}{r} \right)' \hspace{-1.5mm} (\widehat{u}(x,t)) \mu(x,t) \\
&- \left( \frac{r'}{r} \right)' \hspace{-1.5mm} (\widehat{u}(x,t)) \, \frac{ 2 \mu(x,t)^2 }{ r(\widehat{u}(x,t))^2 + \mu(x,t)} 
+ \frac{4 r'(\widehat{u}(x,t))^2 \mu(x,t)^2 }{ ( r(\widehat{u}(x,t))^2 + \mu(x,t))^2} \\
&-\frac{2 \lambda(e^0_1) m_{\lambda}\mu(x,t) }{r(\widehat{u}(x,t))^2}  \left( \frac{2 \widehat{u}'(x,t)}{\tan (\lambda(e^0_1) x)} \frac{r'(\widehat{u}(x,t))}{r(\widehat{u}(x,t))} + \frac{\lambda(e^0_1)}{\sin^2 (\lambda(e^0_1) x)} \right) \\
&-\frac{8 \lambda(e^0_1) m_{2 \lambda} \mu(x,t) }{r(\widehat{u}(x,t))^2}  \left( \frac{\widehat{u}'(x,t)}{\tan (2 \lambda(e^0_1) x)} \frac{r'(\widehat{u}(x,t))}{r(\widehat{u}(x,t))} + \frac{\lambda(e^0_1)}{\sin^2 ( 2 \lambda(e^0_1) x)} \right),
\end{aligned}
\end{equation}
where $\beta(x,t)$ is a certain smooth function. 
We have
\begin{align*}
&-\frac{2 \lambda(e^0_1) m_{\lambda}\mu(x,t) }{r(\widehat{u}(x,t))^2}  \left( \frac{2 \widehat{u}'(x,t)}{\tan (\lambda(e^0_1) x)} \frac{r'(\widehat{u}(x,t))}{r(\widehat{u}(x,t))} + \frac{\lambda(e^0_1)}{\sin^2 (\lambda(e^0_1) x)} \right) \\
&= -\frac{2 \lambda(e^0_1) m_{\lambda}\mu(x,t) }{r(\widehat{u}(x,t))^2}  \left( \frac{\widehat{u}'(x,t)}{\sqrt{\lambda(e^0_1)}} \frac{r'(\widehat{u}(x,t))}{r(\widehat{u}(x,t))}  \cos(\lambda(e^0_1) x)+ \frac{\sqrt{\lambda(e^0_1)}}{\sin (\lambda(e^0_1) x)} \right)^2 \\
&+ \frac{2 \lambda(e^0_1) m_{\lambda}\mu(x,t) }{r(\widehat{u}(x,t))^2} \frac{\mu(x,t)}{\lambda(e^0_1)} \frac{r'(\widehat{u}(x,t))^2}{r(\widehat{u}(x,t))^2}  \cos^2(\lambda(e^0_1) x) \\
&\leq \frac{2 m_{\lambda}\mu(x,t)^2 }{r(\widehat{u}(x,t))^2} \frac{r'(\widehat{u}(x,t))^2}{r(\widehat{u}(x,t))^2},
\end{align*}
and by the same computation, we also have
\begin{align*}
-\frac{8 \lambda(e^0_1) m_{2 \lambda} \mu(x,t) }{r(\widehat{u}(x,t))^2}  \left( \frac{\widehat{u}'(x,t)}{\tan (2 \lambda(e^0_1) x)} \frac{r'(\widehat{u}(x,t))}{r(\widehat{u}(x,t))} + \frac{\lambda(e^0_1)}{\sin^2 ( 2 \lambda(e^0_1) x)} \right)
\leq \frac{2 m_{2 \lambda} \mu(x,t)^2 }{r(\widehat{u}(x,t))^2} \frac{r'(\widehat{u}(x,t))^2}{r(\widehat{u}(x,t))^2}.
\end{align*}
Since $M$ is a rank one symmetric space, we have
\begin{align*}
&\frac{4 r'(\widehat{u}(x,t))^2 \mu(x,t)^2 }{ ( r(\widehat{u}(x,t))^2 + \mu(x,t))^2} 
+ \frac{2 m_{\lambda}\mu(x,t)^2 }{r(\widehat{u}(x,t))^2} \frac{r'(\widehat{u}(x,t))^2}{r(\widehat{u}(x,t))^2}
+ \frac{2 m_{2 \lambda} \mu(x,t)^2 }{r(\widehat{u}(x,t))^2} \frac{r'(\widehat{u}(x,t))^2}{r(\widehat{u}(x,t))^2} \\
\leq& \frac{4 \mu(x,t)^2 }{r(\widehat{u}(x,t))^2} \frac{r'(\widehat{u}(x,t))^2}{r(\widehat{u}(x,t))^2} + \frac{2 (n - 1) \mu(x,t)^2 }{r(\widehat{u}(x,t))^2} \frac{r'(\widehat{u}(x,t))^2}{r(\widehat{u}(x,t))^2} \\
\leq& \frac{2 (n + 1) \mu(x,t)^2 }{r(\widehat{u}(x,t))^2} \frac{r'(\widehat{u}(x,t))^2}{r(\widehat{u}(x,t))^2}.
\end{align*}
Therefore, we obtain the desired inequality.
\end{proof}

The following lemma corresponds to Lemma 3.5 of \cite{HZZ}.
\begin{lemma}\label{lem:const}
Let $Z(t)$, $\overline{\phi}(t)$ be solutions to the following differential equations, respectively:
\begin{equation*}
\frac{d Z}{d t}(t) = - n \frac{r'(Z(t))}{r(Z(t))}, \quad Z(0) = z_0 \in (-a, a),
\end{equation*}
\begin{equation*}
\frac{d \overline{\phi}}{d t}(t) = 2 n \overline{\phi}(t) \frac{r'(Z(t))^2}{r(Z(t))^2}, \quad \overline{\phi}(0) = \overline{\phi}_0 > 0.
\end{equation*}
Then, we have $\overline{\phi}(t) r(Z(t))^2 = \overline{\phi}(0) r(Z(0))^2$ for all $t \in [0,\infty)$.
\end{lemma}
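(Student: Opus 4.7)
The plan is to show the quantity $\Phi(t) \coloneqq \overline{\phi}(t)\, r(Z(t))^2$ is constant by differentiating it directly and invoking both ODEs.

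First, I would compute
\begin{equation*}
\frac{d\Phi}{dt}(t) = \frac{d\overline{\phi}}{dt}(t)\, r(Z(t))^2 + \overline{\phi}(t)\cdot 2\, r(Z(t)) r'(Z(t))\, \frac{dZ}{dt}(t).
\end{equation*}
Then I would substitute the two given ODEs into this expression. The first term becomes $2n\,\overline{\phi}(t)\, r'(Z(t))^2$, while the second term becomes $-2n\,\overline{\phi}(t)\, r'(Z(t))^2$. These cancel exactly, so $d\Phi/dt \equiv 0$, which gives $\Phi(t) = \Phi(0)$ for all $t$ in the common interval of existence.

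To conclude that the identity holds for all $t \in [0,\infty)$, I would briefly note that global existence of $Z$ and $\overline{\phi}$ is not an obstacle: since $r$ is smooth and positive on $I=(-a,a)$ (resp.\ $(-\infty,a)$) and $r'/r$ is therefore locally Lipschitz in $Z$, the solution $Z$ exists as long as it stays inside $I$. The conservation identity $\Phi(t) = \overline{\phi}_0\, r(z_0)^2 > 0$ then forces $r(Z(t))$ to remain bounded below away from zero, and the monotonicity assumptions on $r$ in Theorems \ref{thm:corHZZ} and \ref{thm:corF} prevent $Z(t)$ from leaving $I$; similarly $\overline{\phi}(t) = \overline{\phi}_0\, r(z_0)^2 / r(Z(t))^2$ stays positive and finite. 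Hence both solutions extend globally and the stated identity holds on all of $[0,\infty)$.

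There is no real obstacle here: this is a direct verification using the chain rule and the two defining ODEs. The only mildly delicate point is the global-in-time extension, which however is immediate from the conservation law itself (it bounds $r(Z(t))$ away from $0$) and the hypotheses on $r$ in the theorems to which this lemma will be applied.
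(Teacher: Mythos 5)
Your proof is correct and is essentially the same as the paper's: the authors also simply observe that $d(\overline{\phi}(t)\,r(Z(t))^2)/dt = 0$, which is exactly the computation you carry out. The additional remarks on global existence are a harmless (and reasonable) supplement that the paper omits.
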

\begin{proof}
The statement of this lemma follows from $d(\overline{\phi}(t) r(Z(t))^2)/d t = 0$.
\end{proof}

\begin{lemma}\label{lem:main1}
Let $I = (-a, a)$ and $r \colon I \to \mathbb{R}$ be a positive smooth function. 
Assume that $r$ satisfies the following conditions:
\begin{description}
\item[(1)] $r(0) = 1$ and $r'(0) = 0$;
\item[(2)] $r'(z) > 0$ for all $z \in (0, a)$ and $r'(z) < 0$ for all $z \in (-a, 0)$;
\item[(3)] $r(z) r''(z) - r'(z)^2 \geq 0$.
\end{description} 
If $\widehat{u}_0$ satisfies 
\begin{equation*}
\max_{x \in [0, \pi/2 \lambda(e^0_1)]} \abs{\widehat{u}_0(x)} < a_0, \quad \max_{x \in [0, \pi/2 \lambda(e^0_1)]} \abs{\widehat{u}'_0(x)} < \sqrt{ \frac{n}{2(n+1) \log R_0} },
\end{equation*}
where $R_0 \coloneqq \max \{ r(a_0), r(-a_0) \}$, then there exists a positive constant $d$ such that  
\begin{equation*}
\max_{x \in [0, \pi/2 \lambda(e^0_1)]} \abs{\widehat{u}'(x,t)} < d
\end{equation*}
for all $t \in [0,\widehat{T})$ . 
\end{lemma}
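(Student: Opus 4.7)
The plan is to prove the gradient bound by two applications of the parabolic maximum principle, together with an auxiliary-function (barrier) argument modeled on the conservation law of Lemma \ref{lem:const}.

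\textbf{Step 1 (height bound for $\widehat{u}$).} I would apply the maximum principle to \eqref{eq:rank one}. At a spatial maximum of $\widehat{u}(\cdot,t)$ we have $\widehat{u}' = 0$, which kills both the $(\widehat{u}')^2$-quotient term and the cotangent term, and $\widehat{u}'' \le 0$ makes the diffusion term non-positive; the equation then collapses to $\partial_t \widehat{u}_{\max}(t) \le -n\, r'(\widehat{u}_{\max})/r(\widehat{u}_{\max})$, which by conditions (1)--(2) is non-positive whenever $\widehat{u}_{\max} \ge 0$. A symmetric argument at the spatial minimum controls the lower side. Since the boundary conditions $\widehat{u}'(0,t) = \widehat{u}'(\pi/2\lambda(e^0_1),t) = 0$ are compatible with extending $\widehat{u}$ by $\widehat{W}$-invariance, the maximum principle applies cleanly and yields $|\widehat{u}(x,t)| < a_0$ for all $(x,t)$; together with conditions (1)--(2) this gives $1 \le r(\widehat{u}) \le R_0$ along the flow.

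\textbf{Step 2 (parabolic ODI for $\mu$).} Condition (3) is equivalent to $(\log r)'' = (r'/r)' \ge 0$, so the two terms in Lemma \ref{lem:mu ineq} carrying the factor $-(r'/r)'$ are non-positive. At a spatial maximum of $\mu(\cdot,t)$ (which, by the Neumann-type boundary condition $\widehat{u}'(\partial) = 0$, lies in the interior of the fundamental domain) we have $\mu' = 0$ and $\mu''/(r^2+\mu) \le 0$. These reductions collapse Lemma \ref{lem:mu ineq} to
$$\frac{d}{dt}\mu_{\max}(t) \,\le\, \frac{2(n+1)\,\mu_{\max}(t)^2\, r'(\widehat{u})^2}{r(\widehat{u})^4}.$$

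\textbf{Step 3 (barrier closing the bound).} To convert this ODI into a uniform bound I would introduce an auxiliary function $\Phi(x,t) := \mu(x,t)\cdot h(\widehat{u}(x,t))$, with $h$ a positive function modeled on the conservation law $\overline{\phi}\,r(Z)^2 = \mathrm{const}$ from Lemma \ref{lem:const}; the natural ansatz is $h(z) = r(z)^{2c}$ for a suitable exponent $c$. Applying the spatial maximum principle to $\Phi$, the first-order condition $\Phi' = 0$ at the contact point yields an algebraic relation between $\widehat{u}''$ and $r'(\widehat{u})/r(\widehat{u})$ which, combined with $\Phi'' \le 0$ and fed into $\partial_t \Phi$ (computed from Lemma \ref{lem:mu ineq} and \eqref{eq:rank one}), is meant to balance the $2(n+1)\mu^2(r')^2/r^4$ growth. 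The logarithmic factor $\log R_0$ in the hypothesis enters because $\log r(\widehat{u}) \le \log R_0$ along the flow, making $\Phi$ initially dominated by a constant determined by $\max|\widehat{u}_0'|$ and $\log R_0$, from which the desired uniform $d$ is extracted.

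The main obstacle is Step 3. Condition (3) is borderline --- only $(\log r)'' \ge 0$, with no strict quantitative lower bound --- so no term of the form $-\alpha(r'/r)^2\mu$ is available to absorb the bad quadratic growth directly, in contrast to the situation of Theorem \ref{thm:corF}. The logarithmic shape of the hypothesis $|\widehat{u}_0'|^2 < n/(2(n+1)\log R_0)$ strongly indicates that the correct barrier involves $\log r(\widehat{u})$, but pinning down the exponent $c$ so that the parabolic maximum principle returns a closed estimate, rather than a Gronwall inequality that still blows up, is the central technical task.
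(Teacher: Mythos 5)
Your Steps 1 and 2 are sound in substance (the paper gets the height bound by comparing $\widehat{u}$ with the solutions $Z^{\pm}(t)$ of $\dot Z=-n\,r'(Z)/r(Z)$, $Z^{\pm}(0)=\pm a_0$, which are themselves mean curvature flows of slices, rather than by a maximum principle on \eqref{eq:rank one}, but these are equivalent in effect), and you have correctly located the crux in Step 3. However, Step 3 is not carried out, and the ansatz you propose cannot close. A barrier of the form $\Phi=\mu\,r(\widehat u)^{2c}$ run through a first-touching-time argument would require the initial bound $\max\mu_0\cdot R_0^{2c}\lesssim cn/(n+1)$, whereas the hypothesis only gives $\max\mu_0<\tfrac{n}{2(n+1)\log R_0}$; since $R_0^{2c}=e^{2c\log R_0}>2c\log R_0$ for every $c>0$, no choice of exponent recovers the logarithmic threshold. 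More fundamentally, any purely spatial barrier leaves you with a Riccati coefficient $(r'(\widehat u)/r(\widehat u))^2$ that is merely bounded, and a bounded-but-non-integrable coefficient in $\dot\phi\le C\phi^2$ blows up in finite time --- exactly the failure mode you flagged.

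The missing idea is that the comparison functions $Z^{\pm}(t)$ make this coefficient \emph{integrable in time}, with integral controlled by $\log R_0$. By the comparison principle $Z^-(t)<\widehat u(x,t)<Z^+(t)$, and by monotonicity of $r'/r$ one has $(r'(\widehat u)/r(\widehat u))^2\le(r'(Z^{\pm}(t))/r(Z^{\pm}(t)))^2$ according to which of $Z^+(t)$, $-Z^-(t)$ is larger. Setting $\phi(t)=\max_x\mu(x,t)$, the ODI becomes $\dot\phi\le 2(n+1)\phi^2\,(r'(Z^{\pm})/r(Z^{\pm}))^2=\tfrac{n+1}{n}\tfrac{\phi^2}{\overline\phi^{\pm}}\tfrac{d\overline\phi^{\pm}}{dt}$, where $\overline\phi^{\pm}$ solves the linear ODE of Lemma \ref{lem:const}. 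Integrating $-\tfrac{d}{dt}(1/\phi)\le\tfrac{n+1}{n}\tfrac{d}{dt}\log\overline\phi^{\pm}$ and invoking the conservation law $\overline\phi^{\pm}(t)\,r(Z^{\pm}(t))^2=\mathrm{const}$ converts the increment of $\log\overline\phi^{\pm}$ into a telescoping difference of $2\log r(Z^{\pm})$, giving the global bound $\tfrac{1}{\phi(0)}-\tfrac{1}{\phi(t)}\le\tfrac{2(n+1)}{n}\log R_0$. The hypothesis $\phi(0)<\tfrac{n}{2(n+1)\log R_0}$ is precisely what keeps $1/\phi(t)$ bounded away from zero, yielding $\phi(t)\le n\phi(0)/(n-2(n+1)\phi(0)\log R_0)$. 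So your intuition that Lemma \ref{lem:const} and $\log r$ must enter is right, but the mechanism is a time-integrated comparison with the slice flows, not a spatial auxiliary function.
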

\begin{proof}
By the assumptions, we have $r(z) \geq 1$ and $(r'/r)' \geq 0$. Hence, from Lemma \ref{lem:mu ineq}, we obtain
\begin{align}
\frac{\partial \mu}{\partial t}(x,t) 
\leq& \frac{\mu''(x,t)}{r(\widehat{u}(x,t))^2  + \mu(x,t)} + \beta(x,t) \mu'(x,t)  - 2 n \left( \frac{r'}{r} \right)' \hspace{-1.5mm} (\widehat{u}(x,t)) \mu(x,t) \notag\\ 
&- \left( \frac{r'}{r} \right)' \hspace{-1.5mm} (\widehat{u}(x,t)) \, \frac{ 2 \mu(x,t)^2 }{ r(\widehat{u}(x,t))^2 + \mu(x,t)} 
+  \frac{2 (n + 1) \mu(x,t)^2}{r(\widehat{u}(x,t))^2} \frac{r'(\widehat{u}(x,t))^2}{r(\widehat{u}(x,t))^2} \notag\\
\leq& \frac{\mu''(x,t)}{r(\widehat{u}(x,t))^2  + \mu(x,t)} + \beta(x,t) \mu'(x,t)  +2(n+1)\mu(x,t)^2 \frac{r'(\widehat{u}(x,t))^2}{r(\widehat{u}(x,t))^2}. \label{ineq:mu}
\end{align}
Define $Z^{\pm}(t)$ and $\overline{\phi}^{\pm}(t)$ by solutions to the following differential equations, respectively:
\begin{equation*}
\frac{d Z^{\pm}}{d t}(t) = - n \frac{r'(Z^{\pm}(t))}{r(Z^{\pm}(t))}, \quad Z^{\pm}(0) = \pm a_0,
\end{equation*}
\begin{equation*}
\frac{d \overline{\phi}^{\pm}}{d t}(t) = 2 n \overline{\phi}^{\pm}(t) \frac{r'(Z^{\pm}(t))^2}{r(Z^{\pm}(t))^2}, \quad \overline{\phi}^{\pm}(0) = \overline{\phi}_0 > 0.
\end{equation*}
Here, we note that $Z^{+}(t)$ and $Z^{-}(t)$ define mean curvature flows of graph hypersurfaces in $\overline{M}$.
Hence, by the comparison principle of the mean curvature flow, $Z^{-}(t) < \widehat{u}(x,t) < Z^{+}(t)$ holds for all $(x,t) \in [0,\pi/2 \lambda(e^0_1)] \times [0,\widehat{T})$. 
We also have 
\begin{equation*}
\frac{r'(\widehat{u}(x,t))^2}{r(\widehat{u}(x,t))^2} \leq 
\begin{cases}
\frac{r'(Z^{+}(t))^2}{r(Z^{+}(t))^2} & \text{if } -Z^{-}(t) \leq Z^{+}(t)\\
\frac{r'(Z^{-}(t))^2}{r(Z^{-}(t))^2} & \text{if }Z^{+}(t) \leq -Z^{-}(t)
\end{cases}
\end{equation*}
by the monotonicity of $r'/r$.
Set $\phi(t) \coloneqq \max_{x \in [0, \pi/2 \lambda(e^0_1)]} \mu(x,t)$. If $Z^{-}(t) \leq Z^{+}(t)$ holds for $t \in (t_1, t_2)$, then from \eqref{ineq:mu}, we have
\begin{equation*}
\frac{d \phi}{d t}(t) \leq 2(n + 1) \phi(t)^2 \frac{r'(Z^{+}(t))^2}{r(Z^{+}(t))^2} = \frac{n+1}{n} \frac{\phi(t)^2}{\overline{\phi}^{+}(t)} \frac{d \overline{\phi}^{+}}{d t}(t)
\end{equation*}
for $t \in (t_1, t_2)$.
Hence it holds that
\begin{equation*}
- \frac{d}{d t} \left( \frac{1}{\phi(t)} \right) \leq \frac{n+1}{n} \frac{d \log \overline{\phi}^{+}(t) }{d t},
\end{equation*}
then integrate it to obtain
\begin{align*}
\frac{1}{\phi(t)} - \frac{1}{\phi(t_2)} 
\leq& \frac{n+1}{n} \left( \log \overline{\phi}^{+}(t_2) - \log \overline{\phi}^{+}(t) \right) \\
=& \frac{n+1}{n} \left( \log \overline{\phi}^{+}(t) + 2 \log r(Z^{+}(t)) -2  \log r(Z^{+}(t_2)) - \log \overline{\phi}^{+}(t) \right) \\
\leq& \frac{2(n+1)}{n} \left( \log r(Z^{+}(t)) -  \log r(Z^{+}(t_2)) \right),
\end{align*}
where we use $\overline{\phi}^{+}(t_2) = \overline{\phi}^{+}(t)r(Z^{+}(t))^2/r(Z^{+}(t_2))^2$ by Lemma \ref{lem:const}. 
Similarly, if $Z^{+}(t) \leq -Z^{-}(t)$ holds for $t \in (t_2, t_3)$, we have
\begin{equation*}
\frac{1}{\phi(t_2)} - \frac{1}{\phi(t)} \leq \frac{2(n+1)}{n} \left( \log r(Z^{-}(t_2)) -  \log r(Z^{-}(t)) \right).
\end{equation*}
Thus, for $s \in (t_1, t_2)$ and $t \in (t_2, t_3)$, we obtain
\begin{equation*}
\frac{1}{\phi(s)} - \frac{1}{\phi(t)} \leq \frac{2(n+1)}{n} \left( \log r(Z^{+}(s)) -  \log r(Z^{-}(t)) \right) \leq \frac{2(n+1)}{n} \log r(Z^{+}(s)) 
\end{equation*}
since $Z^{+}(t_2) = -Z^{-}(t_2)$.
By the above argument, we obtain
\begin{align*}
\frac{1}{\phi(0)} - \frac{1}{\phi(t)} \leq \frac{2(n+1)}{n} \log R_0
\end{align*}
for $t \in [0,\widehat{T})$, where $R_0 \coloneqq \max \{ r(a_0), r(-a_0) \}$.
Thus, by the assumption, we have 
\begin{equation*}
0 < \frac{1}{\phi(0)} - \frac{2(n+1)}{n} \log R_0 \leq \frac{1}{\phi(t)},
\end{equation*}
and we conclude that
\begin{equation*}
\phi(t) \leq \frac{n \phi(0)}{n - 2 (n+1) \phi(0) \log R_0}.
\end{equation*}
\end{proof}

\begin{lemma}\label{lem:main2}
Let $I = (-\infty, a)$ and $r \colon I \to \mathbb{R}$ be a positive smooth function.
Assume that $r$ satisfies the following conditions:
\begin{description}
\item[(1)] $r'(z) > 0$ for all  $z \in (-\infty,a)$;
\item[(2)] $r(z) r''(z) - (1 + \alpha){r'(z)}^2 \geq 0$ for some positive constant $\alpha$ greater than one.
\end{description}
If $\widehat{u}_0$ satisfies that
\begin{equation*}
\min_{x \in [0, \pi/2 \lambda(e^0_1)]} \widehat{u}_0(x) > a_1, \quad \max_{x \in [0, \pi/2 \lambda(e^0_1)]} \abs{\widehat{u}'_0(x)} < \sqrt{\frac{n(\alpha - 1)}{n+1}} r(a_1)
\end{equation*}
then we have
\begin{equation*}
\max_{x \in [0, \pi/2 \lambda(e^0_1)]} \abs{\widehat{u}'(x,t)} < \sqrt{\frac{n(\alpha - 1)}{n+1}} r(Z(t))
\end{equation*}
for all $t \in [0,\widehat{T})$, where $Z(t)$ is a solution to the following differential equation,
\begin{equation*}
\frac{d Z}{d t}(t) = - n \frac{r'(Z(t))}{r(Z(t))}, \quad Z(0) = a_1.
\end{equation*} 
\end{lemma}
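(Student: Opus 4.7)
The plan is to run a maximum-principle argument parallel to that of Lemma \ref{lem:main1}, but now comparing $\phi(t) \coloneqq \max_x \widehat{u}'(x,t)^2$ against the (now time-dependent, decreasing) barrier $\Phi(t) \coloneqq \frac{n(\alpha-1)}{n+1} r(Z(t))^2$. First I would rewrite hypothesis (2) as
\begin{equation*}
\left(\frac{r'}{r}\right)'(z) = \frac{r(z) r''(z) - r'(z)^2}{r(z)^2} \geq \alpha \, \frac{r'(z)^2}{r(z)^2}
\end{equation*}
and substitute this into the estimate from Lemma \ref{lem:mu ineq}. The horizontal slice $M \times \{Z(t)\}$ is itself a graphical mean curvature flow (its height evolves by exactly the ODE defining $Z$), so the standard comparison principle combined with the initial hypothesis $\widehat{u}_0 > a_1 = Z(0)$ yields $\widehat{u}(x,t) > Z(t)$ for all admissible $(x,t)$. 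Since $(r'/r)' > 0$, the function $r'/r$ is strictly increasing, hence $r'(\widehat{u})/r(\widehat{u}) > r'(Z)/r(Z)$ pointwise.

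Set $\mu \coloneqq \widehat{u}'^2$; by the initial hypothesis $\phi(0) < \Phi(0)$. Suppose for contradiction that there is a first touching time $t_0 \in (0, \widehat{T})$ with $\phi(t_0) = \Phi(t_0)$, and let $x_0$ realise the maximum at $t_0$. The boundary condition $\widehat{u}'(0,t) = \widehat{u}'(\pi/2\lambda(e^0_1),t) = 0$ from Corollary \ref{cor:rank one} forces $x_0$ to be interior, so $\mu'(x_0, t_0) = 0$ and $\mu''(x_0, t_0) \leq 0$. Dropping the nonpositive diffusion term and using the above substitution for $(r'/r)'$ in Lemma \ref{lem:mu ineq}, with $\mu_0 \coloneqq \mu(x_0, t_0) = \Phi(t_0) = \frac{n(\alpha-1)}{n+1} r(Z(t_0))^2$, the estimate reduces to
\begin{equation*}
\partial_t \mu(x_0, t_0) \leq \frac{2 r'(\widehat{u})^2 \mu_0}{r(\widehat{u})^2}\left[-n\alpha - \alpha \frac{\mu_0}{r(\widehat{u})^2 + \mu_0} + \frac{(n+1)\mu_0}{r(\widehat{u})^2}\right].
\end{equation*}
Since $\widehat{u} > Z$ and $r$ is increasing, $r(\widehat{u})^2 > r(Z)^2$ so $(n+1)\mu_0/r(\widehat{u})^2 < n(\alpha-1)$, and the bracket is strictly less than $-n$. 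Meanwhile, differentiating the barrier using $Z' = -n r'(Z)/r(Z)$ gives $\Phi'(t_0) = -\frac{2 n^2(\alpha-1)}{n+1}(r'(Z)/r(Z))^2 r(Z)^2$, while the above bound yields $\partial_t \mu(x_0, t_0) < -\frac{2 n^2(\alpha-1)}{n+1}(r'(\widehat{u})/r(\widehat{u}))^2 r(Z(t_0))^2$. Strict monotonicity of $r'/r$ then produces $\partial_t \mu(x_0, t_0) < \Phi'(t_0)$, contradicting the fact that $t_0$ is a first touching time.

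I expect the main obstacle to be a careful justification of this maximum-principle step, since $\phi(t)$ need not be differentiable. The standard remedy is Hamilton's trick: at a first touching time $t_0$ the left upper Dini derivative satisfies $D^-\phi(t_0) \geq \Phi'(t_0)$ (as $\phi - \Phi$ is negative on $[0, t_0)$ and vanishes at $t_0$), while at any maximiser $x_0$ one has $D^-\phi(t_0) \leq \partial_t \mu(x_0, t_0)$, so the strict inequality above closes the contradiction. A secondary point to verify is that the endpoints of $[0, \pi/2\lambda(e^0_1)]$ cannot host the touching, but this is immediate since $\mu = 0$ there while $\Phi(t) > 0$ for all $t$.
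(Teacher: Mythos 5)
Your proposal is correct and follows essentially the same route as the paper: the same substitution of hypothesis (2) into Lemma \ref{lem:mu ineq}, the same comparison $Z(t)<\widehat{u}(x,t)$, and a first-touching-time maximum principle against the threshold $\tfrac{n(\alpha-1)}{n+1}r(Z(t))^2$. The only (cosmetic) difference is that the paper normalizes by setting $\psi=\mu/r(Z)^2$ so the barrier becomes the constant $c=n(\alpha-1)/(n+1)$ and the contradiction is purely algebraic, whereas you keep the time-dependent barrier $\Phi(t)=c\,r(Z(t))^2$ and close the argument with a strict differential inequality plus Hamilton's trick.
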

\begin{proof}
By the assumption, we have
\begin{equation*}
-\left( \frac{r'}{r} \right)'(z) \leq - \alpha \left( \frac{r'}{r} \right)^2 (z).
\end{equation*}
From Lemma \ref{lem:mu ineq}, we obtain
\begin{align*}
\frac{\partial \mu}{\partial t}(x,t) 
\leq& \frac{\mu''(x,t)}{r(\widehat{u}(x,t))^2  + \mu(x,t)} + \beta(x,t) \mu'(x,t)  - 2 n \alpha \frac{r'(\widehat{u}(x,t))^2}{r(\widehat{u}(x,t))^2} \mu(x,t) \\
&- \alpha \frac{r'(\widehat{u}(x,t))^2}{r(\widehat{u}(x,t))^2} \frac{ 2 \mu(x,t)^2 }{ r(\widehat{u}(x,t))^2 + \mu(x,t)} 
+  \frac{2 (n + 1)\mu(x,t)^2}{r(\widehat{u}(x,t))^2} \frac{r'(\widehat{u}(x,t))^2}{r(\widehat{u}(x,t))^2} \\
=& \frac{\mu''(x,t)}{r(\widehat{u}(x,t))^2  + \mu(x,t)} + \beta(x,t) \mu'(x,t) \\
&-2 \frac{r'(\widehat{u}(x,t))^2}{r(\widehat{u}(x,t))^2}  \mu(x,t) \left( n \alpha + \frac{ \alpha \mu(x,t)}{ r(\widehat{u}(x,t))^2 + \mu(x,t)} - (n+1) \frac{\mu(x,t)}{r(\widehat{u}(x,t))^2} \right).
\end{align*}
Set
\begin{equation*}
\psi(x,t) \coloneqq \frac{\mu(x,t)}{r(Z(t))^2},
\end{equation*}
where $Z$ is the solution to the following differential equaiton
\begin{equation*}
\frac{d Z}{d t}(t) = - n \frac{r'(Z(t))}{r(Z(t))}, \quad Z(0) = a_1.
\end{equation*}
Note that $Z(t) < \widehat{u}(x,t)$ holds for all $(x,t) \in [0,\pi/2 \lambda(e^0_1)] \times [0,\widehat{T})$ as in the proof of Lemma \ref{lem:main1}.
The function $\psi$ satisfies 
\begin{align*}
\frac{\partial \psi}{\partial t}(x,t) 
=& \frac{1}{r(Z(t))^2} \frac{\partial \mu}{\partial t}(x,t) -  \frac{2 \mu(x,t)}{r(Z(t))^3} r'(Z(t)) \frac{d Z}{d t}(t) \\
\leq& \frac{\psi''(x,t)}{r(\widehat{u}(x,t))^2  + \mu(x,t)} + \beta(x,t) \psi'(x,t) \\
&-2 \frac{r'(\widehat{u}(x,t))^2}{r(\widehat{u}(x,t))^2}  \psi(x,t) \left( n \alpha + \frac{ \alpha \mu(x,t)}{ r(\widehat{u}(x,t))^2 + \mu(x,t)} - (n+1) \frac{\mu(x,t)}{r(\widehat{u}(x,t))^2} \right) \\
&+  2 n \psi(x,t) \frac{r'(Z(t))^2}{r(Z(t))^2} \\
\leq& \frac{\psi''(x,t)}{r(\widehat{u}(x,t))^2  + \mu(x,t)} + \beta(x,t) \psi'(x,t) \\
&-2 \frac{r'(\widehat{u}(x,t))^2}{r(\widehat{u}(x,t))^2}  \psi(x,t) \left( n \alpha + \frac{ \alpha \mu(x,t)}{ r(\widehat{u}(x,t))^2 + \mu(x,t)} - (n+1) \psi(x,t) - n \right),
\end{align*}
where we use the monotonicity of $r$ and $r'/r$. 
Set $c \coloneqq n(\alpha - 1)/(n+1)$.
Suppose that the function $\psi$ satisfies $\max_{x \in [0,\pi/2\lambda(e^0_1)]} \psi(x,0) < c$ and that $\psi(x_0, t_0)$ attains $c$ for the first time. 
Then, the followings holds:
\begin{equation*}
\frac{\partial \psi}{\partial t}(x_0, t_0) \geq 0, \quad \psi''(x_0, t_0) \leq 0, \quad \psi'(x_0, t_0) = 0.
\end{equation*}
Hence, we have
\begin{equation*}
0 \leq -2 \frac{r'(\widehat{u}(x_0,t_0))^2}{r(\widehat{u}(x,t))^2}  c \left( n \alpha + \frac{ \alpha \mu(x_0,t_0)}{ r(\widehat{u}(x_0,t_0))^2 + \mu(x_0,t_0)} -  (n+1) c - n \right),
\end{equation*}
equivalently,
\begin{equation}\label{eq:condition}
( r(\widehat{u}(x_0,t_0))^2 + \mu(x_0,t_0) ) (n \alpha - (n+1) c - n)  + \alpha \mu(x_0,t_0) \leq 0.
\end{equation} 
However, from the assumption, we have
\begin{equation*}
n \alpha - (n+1) c - n = n(\alpha-1) - (n+1) c = 0.
\end{equation*}
This leads to a contradiction. 
Thus, we have 
\begin{equation*}
\psi(x, t) < \frac{n(\alpha - 1)}{n+1}
\end{equation*}
for all $(x,t) \in [0,\pi/2 \lambda(e^0_1)] \times [0,\widehat{T})$.
\end{proof}

\begin{proof}[Proof of Theorems \ref{thm:corHZZ} and \ref{thm:corF}]
Define a function $\Theta_t \colon M \to \mathbb{R}$ by
\begin{equation*}
\Theta_t(p) \coloneqq \overline{g}_{F_t(p)}((N_t)_p, (\overline{\partial_z})_{F_t(p)}),
\end{equation*}
where $N_t$ is a unit normal vector field of $F_t(M)$. 
Let $u_0 \colon M \to I$ be a $K$-invariant smooth function and set $F_t(p) = (p, u_t(p))$. 
Then, we have
\begin{equation}\label{eq:angle}
\Theta_t(p) = \frac{r(u_t(p))}{\sqrt{r(u_t(p))^2 + \abs{\mathrm{grad}\,  u_t(p)}^2}} = \frac{r(\widehat{u}_t(x))}{\sqrt{r(\widehat{u}_t(x))^2 + \widehat{u}'_t(x)^2}},
\end{equation}
where $p = \mathrm{exp}_{e K}\left( x e^0_1 \right)$.
From Lemma \ref{lem:main1} and Lemma \ref{lem:main2}, $(\widehat{u}'_t)^2$ is uniformly bounded under both assumptions of Theorems \ref{thm:corHZZ} and \ref{thm:corF}.
The uniform boundedness of $(\widehat{u}'_t)^2$ and equation \eqref{eq:angle} imply that there exists a positive constant $c > 0$ such that $\Theta_t \geq c$ for all $t$. 
Therefore, we can derive the long-time existence of the flow by the arguments as in the proofs of Theorem 1.2 in \cite{HZZ} and Theorem 1.2 in \cite{F}.
\end{proof}
\begin{rem}
From the initial condition of Theorem \ref{thm:corF}, we have  
\begin{equation*}
\max_{x \in [0, \pi/2 \lambda(e^0_1)]} \abs{\widehat{u}'_0(x)} < \sqrt{\frac{n(\alpha - 1)}{n+1}} r(a_1) \leq \sqrt{\frac{n(\alpha - 1)}{n+1}} r(\widehat{u}_0(x)).
\end{equation*}
This implies that 
\begin{equation*}
\Theta_0(p) = \frac{r(\widehat{u}_0(x))}{\sqrt{r(\widehat{u}_0(x))^2 + \widehat{u}'_0(x)^2}} \geq \frac{r(\widehat{u}_0(x))}{\sqrt{r(\widehat{u}_0(x))^2 + \frac{n(\alpha - 1)}{n+1}  r(\widehat{u}_0(x))^2 } } = \sqrt{\frac{n+1}{n+1 + n (\alpha -1) }} > \sqrt{\frac{1}{\alpha}}.
\end{equation*}
Therefore, the long-time existence of the flow follows directly from Theorem 1.2 of \cite{F}. 
However, the upper bound of $(\widehat{u}'_t)^2$ can be provided explicitely in this setting.
\end{rem}

\section*{Acknowledgements} 
The second named author is supported by JSPS KAKENHI Grant Number JP22K03300.

\vspace{0.5truecm}

\begin{flushright}
{\small 
Naotoshi Fujihara \\
Department of Mathematics \\
Graduate School of Science \\
Tokyo University of Science \\
1-3 Kagurazaka \\
Shinjuku-ku \\
Tokyo 162-8601 \\
Japan \\
(E-mail: 1123706@ed.tus.ac.jp)
}
\end{flushright}

\begin{flushright}
{\small 
Naoyuki Koike \\
Department of Mathematics \\
Faculty of Science \\
Tokyo University of Science \\
1-3 Kagurazaka \\
Shinjuku-ku \\
Tokyo 162-8601 \\
Japan \\
(E-mail: koike@rs.tus.ac.jp)
}
\end{flushright}

\end{document}